\newcounter{lemma}[section]
\newcounter{corollary}[section]
\newcounter{remark}[section]
\newcounter{theorem}[section]
\newcounter{proposition}[section]
\newcounter{example}
\numberwithin{equation}{section}
\begin{document}

\markboth{\centerline{E.~SEVOST'YANOV}} {\centerline{MAPPINGS WITH
THE INVERSE POLETSKY INEQUALITY...}}

\def\cc{\setcounter{equation}{0}
\setcounter{figure}{0}\setcounter{table}{0}}

\overfullrule=0pt


\author{EVGENY SEVOST'YANOV\\}

\title{
{\bf MAPPINGS WITH THE INVERSE POLETSKY INEQUALITY ARE DISCRETE ON
THE BOUNDARY}}

\date{\today}
\maketitle

\begin{abstract}
It is established a continuous boundary extension of some class of
mappings. Under some additional conditions, we have established that
this extension is light in the closure of the definition domain.
Under some stronger conditions, we also have proved that it is open
and discrete there.
\end{abstract}

\bigskip
{\bf 2010 Mathematics Subject Classification: Primary 30C65;
Secondary 31A15, 31A20, 30L10}

\section{Introduction}

The article is devoted to the investigation of mappings with the
so-called inverse Poletsky inequality, which are more general than
quasiregular mappings. In our papers, we have studied their boundary
behavior, see~\cite{SSD} and \cite{Sev$_2$}. In particular, it was
shown that they have a continuous boundary extension while some
majorant in the inequality mentioned above is integrable. Moreover,
it suffices for this majorant to be integrable only on spheres, see
ibid. In this article, we will show something more, namely, that,
under some additional conditions, this extension is light in the
closure. We also prove that, it is discrete in the closure of the
definition domain under some more strong assumptions. We note that
this result cannot be obtained from the previous ones by elementary
reasoning, since its nature is more subtle and requires rather
complicated investigations.

\medskip
Let us turn to the definitions. In what follows, $M_p(\Gamma)$
denotes the {\it $p$-modulus} of a family $\Gamma $
(see~\cite[Section~6]{Va}). We write $M(\Gamma)$ instead
$M_n(\Gamma).$ Let $y_0\in {\Bbb R}^n,$ $0<r_1<r_2<\infty$ and
\begin{equation}\label{eq1**}
A=A(y_0, r_1,r_2)=\left\{ y\,\in\,{\Bbb R}^n:
r_1<|y-y_0|<r_2\right\}\,.\end{equation}
Given $x_0\in{\Bbb R}^n,$ we put
$$B(x_0, r)=\{x\in {\Bbb R}^n: |x-x_0|<r\}\,, \quad {\Bbb B}^n=B(0, 1)\,,$$
$$S(x_0,r) = \{
x\,\in\,{\Bbb R}^n : |x-x_0|=r\}\,. $$
Given sets $E,$ $F\subset\overline{{\Bbb R}^n}$ and a domain
$D\subset {\Bbb R}^n$ we denote by $\Gamma(E,F,D)$ a family of all
paths $\gamma:[a,b]\rightarrow \overline{{\Bbb R}^n}$ such that
$\gamma(a)\in E,\gamma(b)\in\,F$ and $\gamma(t)\in D$ for $t \in [a,
b].$ Given a mapping $f:D\rightarrow {\Bbb R}^n,$ a point $y_0\in
\overline{f(D)}\setminus\{\infty\},$ and
$0<r_1<r_2<r_0=\sup\limits_{y\in f(D)}|y-y_0|,$ we denote by
$\Gamma_f(y_0, r_1, r_2)$ a family of all paths $\gamma$ in $D$ such
that $f(\gamma)\in \Gamma(S(y_0, r_1), S(y_0, r_2),
A(y_0,r_1,r_2)).$ Let $Q:{\Bbb R}^n\rightarrow [0, \infty]$ be a
Lebesgue measurable function and let $p\geqslant 1.$ We say that
{\it $f$ satisfies the inverse Poletsky inequality} at a point
$y_0\in\overline{f(D)}\setminus \{\infty\}$ relative to $p$-modulus,
if the relation

\begin{equation}\label{eq2*B}
M_p(\Gamma_f(y_0, r_1, r_2))\leqslant
\int\limits_{A(y_0,r_1,r_2)\cap f(D)} Q(y)\cdot \eta^p (|y-y_0|)\,
dm(y)
\end{equation}
holds for any Lebesgue measurable function $\eta:
(r_1,r_2)\rightarrow [0,\infty ]$ such that
\begin{equation}\label{eqB2}
\int\limits_{r_1}^{r_2}\eta(r)\, dr\geqslant 1\,.
\end{equation}
Using the inversion $\psi(y)=\frac{y}{|y|^2},$ we also may defined
the relation~(\ref{eq2*B}) at the point $y_0=\infty.$

A mapping $f: D \rightarrow{\Bbb R}^n$ is called {\it discrete} if
the pre-image $\{f^{-1}\left(y\right)\}$ of any point $y\,\in\,{\Bbb
R}^n$ consists of isolated points, and {\it open} if the image of
any open set $U\subset D$ is an open set in ${\Bbb R}^n.$ A mapping
$f$ of $D$ onto $D^{\,\prime}$ is called {\it closed} if $f(E)$ is
closed in $D^{\,\prime}$ for any closed set $E\subset D$ (see, e.g.,
\cite[Chapter~3]{Vu}). Let $h$ be a chordal metric in
$\overline{{\Bbb R}^n},$
$$h(x,\infty)=\frac{1}{\sqrt{1+{|x|}^2}}\,,$$
\begin{equation}\label{eq3C}
h(x,y)=\frac{|x-y|}{\sqrt{1+{|x|}^2} \sqrt{1+{|y|}^2}}\qquad x\ne
\infty\ne y\,.
\end{equation}
and let $h(E):=\sup\limits_{x,y\in E}\,h(x,y)$ be a chordal diameter
of a set~$E\subset \overline{{\Bbb R}^n}$ (see, e.g.,
\cite[Definition~12.1]{Va}).

\medskip
Everywhere further the boundary $\partial A $ of the set $ A $ and
the closure $\overline{A}$ should be understood in the sense
extended Euclidean space $\overline{{\Bbb R}^n}.$ A continuous
extension of the mapping $f:D\rightarrow{\Bbb R}^n$ also should be
understood in terms of mapping with values in $\overline{{\Bbb
R}^n}$ and relative to the metric $h$ in~(\ref{eq3C}) (if a
misunderstanding is impossible). Recall that a domain $D\subset{\Bbb
R}^n$ is called {\it locally connected at the point} $x_0\in
\partial D,$ if for any neighborhood $U$ of a point $x_0$ there is a
neighborhood $V\subset U$ of $x_0$ such that $V\cap D$ is connected.
A domain $D$ is locally connected at $\partial D,$ if $D$ is locally
connected at any point $x_0\in \partial D.$ Similarly,  domain
$D\subset{\Bbb R}^n$ is called {\it finitely connected at the point}
$x_0\in
\partial D,$ if for any neighborhood $U$ of a point $x_0$ there is a
neighborhood $V\subset U$ of $x_0$ such that $V\cap D$ has a finite
number of components. A domain $D$ is finitely connected at
$\partial D,$ if $D$ is finitely connected at any point $x_0\in
\partial D.$

 The boundary of the
domain $D$ is called {\it weakly flat} at the point $x_0\in \partial
D, $ if for any $P> 0$ and for any neighborhood $U$ of a point $x_0
$ there is a neighborhood $V\subset U$ of the same point such that
$M(\Gamma(E, F, D))> P$ for any continua $E, F \subset D,$ which
intersect $\partial U$ and $\partial V.$ The boundary of the domain
$D$ is called weakly flat if the corresponding property is fulfilled
at any point of the boundary $D.$

\medskip
Set
\begin{equation}\label{eq12}
q_{y_0}(r)=\frac{1}{\omega_{n-1}r^{n-1}}\int\limits_{S(y_0,
r)}Q(y)\,d\mathcal{H}^{n-1}(y)\,, \end{equation}
and $\omega_{n-1}$ denotes the area of the unit sphere ${\Bbb
S}^{n-1}$ in ${\Bbb R}^n.$

\medskip
We say that a function ${\varphi}:D\rightarrow{\Bbb R}$ has a {\it
finite mean oscillation} at a point $x_0\in D,$ write $\varphi\in
FMO(x_0),$ if
$$\limsup\limits_{\varepsilon\rightarrow
0}\frac{1}{\Omega_n\varepsilon^n}\int\limits_{B( x_0,\,\varepsilon)}
|{\varphi}(x)-\overline{{\varphi}}_{\varepsilon}|\ dm(x)<\infty\,,
$$
where $\overline{{\varphi}}_{\varepsilon}=\frac{1}
{\Omega_n\varepsilon^n}\int\limits_{B(x_0,\,\varepsilon)}
{\varphi}(x) \,dm(x)$ and $\Omega_n$ is the volume of the unit ball
${\Bbb B}^n$ in ${\Bbb R}^n.$
We also say that a function ${\varphi}:D\rightarrow{\Bbb R}$ has a
finite mean oscillation at $A\subset \overline{D},$ write
${\varphi}\in FMO(A),$ if ${\varphi}$ has a finite mean oscillation
at any point $x_0\in A.$

\medskip
The followings statement is very similar to~Theorem~4 in
\cite{Sev$_2$} or~\cite[Theorem~3.1]{SSD}, but uses some another
conditions on the function~$Q.$ For quasiregular mappings it was
proved in~\cite[Corollary~4.3]{Vu} (cf.~\cite[Theorem~4.2]{Sr}).

\medskip
\begin{theorem}\label{th1}
{\sl\, Let $D\subset {\Bbb R}^n,$ $n\geqslant 2, $ be a domain with
a weakly flat boundary, and let $D^{\,\prime}\subset {\Bbb R}^n$ be
finitely connected at its boundary. Suppose that $f$ is open
discrete and closed mapping of $D$ onto $D^{\,\prime}$ satisfying
the relation~(\ref{eq2*B}) at any point $y_0\in D^{\,\prime}$ with
$p=n.$ Assume that, one of the following conditions hold:

\medskip
1) $Q\in FMO(\partial D^{\,\prime});$

\medskip
2) for any $y_0\in \partial D^{\,\prime}$ there is $\delta(y_0)>0$
such that, for any $0<b_0<\delta(y_0)$ there is $0<a<b_0$ such that
\begin{equation}\label{eq5**}
\int\limits_{a}^{\delta(b_0)}
\frac{dt}{tq_{y_0}^{\frac{1}{n-1}}(t)}>0;
\end{equation}

\medskip
3) for any point $y_0\in \partial D^{\,\prime}$ and
$0<r_1<r_2<r_0:=\sup\limits_{y\in D^{\,\prime}}|y-y_0|$ there is a
set of a positive linear Lebesgue measure $E\subset[r_1, r_2]$ such
that a function $Q$ is integrable on $S(y_0, r)$  for any $r\in E;$

\medskip
4) $Q\in L^1(D).$

\medskip
Then $f$ has a continuous extension
$\overline{f}:\overline{D}\rightarrow\overline{D^{\,\prime}},$ while
$\overline{f}(\overline{D})=\overline{D^{\,\prime}}.$ }
\end{theorem}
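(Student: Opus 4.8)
The plan is to follow the scheme of the proof of \cite[Theorem~3.1]{SSD} (equivalently, Theorem~4 in \cite{Sev$_2$}); the genuinely new point is the verification, under each of hypotheses (1)--(4) separately, of a suitable upper bound for the moduli $M(\Gamma_f(y_0,\varepsilon,\varepsilon_0))$ at points $y_0\in\partial D^{\,\prime}.$ First I would reduce the whole statement to the claim that, for each $x_0\in\partial D,$ the cluster set $C(f,x_0)$ is a single point. Indeed, granting this, one sets $\overline{f}(x_0)$ equal to that point, checks in the standard way that $\overline{f}$ is continuous on $\overline{D}$ relative to the chordal metric $h,$ and deduces $\overline{f}(\overline{D})=\overline{D^{\,\prime}}$ from $f(D)=D^{\,\prime},$ the compactness of $\overline{D}$ in $\overline{{\Bbb R}^n},$ and the inclusion $C(f,x_0)\subset\partial D^{\,\prime},$ the latter being a standard consequence of $f$ being open, discrete and closed (see \cite[Chapter~3]{Vu}). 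I would also use from the start that a weakly flat boundary is, in particular, finitely connected and locally connected at the boundary, so that points of $D$ near $x_0$ may be joined inside $D$ by continua of arbitrarily small diameter (cf.\ \cite{SSD}).

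The core is the modulus estimate: for each $y_0\in\partial D^{\,\prime}$ there is $\varepsilon_0>0$ such that $M(\Gamma_f(y_0,\varepsilon,\varepsilon_0))\to 0$ as $\varepsilon\to 0^{+}$ under (1) or (2), and $M(\Gamma_f(y_0,\varepsilon,\varepsilon_0))$ stays bounded as $\varepsilon\to 0^{+}$ under (3) or (4). Substituting $\eta(t)=\bigl(t\,q_{y_0}^{1/(n-1)}(t)\bigr)^{-1}\big/I(\varepsilon,\varepsilon_0),$ with $I(\varepsilon,\varepsilon_0)=\int_\varepsilon^{\varepsilon_0}dt\big/\bigl(t\,q_{y_0}^{1/(n-1)}(t)\bigr),$ into (\ref{eq2*B}) and passing to spherical coordinates via (\ref{eq12}) and Fubini's theorem gives
$$M(\Gamma_f(y_0,\varepsilon,\varepsilon_0))\leqslant\frac{\omega_{n-1}}{I(\varepsilon,\varepsilon_0)^{\,n-1}}\,;$$
under (2) the conclusion is then exactly the divergence of $I(\varepsilon,\varepsilon_0),$ while under (1) one uses instead the test function proportional to $\bigl(t\log(1/t)\bigr)^{-1}$ together with the classical estimate for functions of finite mean oscillation, $\int\limits_{\varepsilon<|y-y_0|<\varepsilon_0}Q(y)\,dm(y)\big/\bigl(|y-y_0|\log(1/|y-y_0|)\bigr)^{n}=O\bigl(\log\log(1/\varepsilon)\bigr),$ which, since $n\geqslant 2,$ again forces the bound to vanish. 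Hypothesis (4) reduces to (3): if $Q\in L^1(D),$ then by Fubini $Q$ is integrable over $S(y_0,r)$ for a.e.\ $r,$ so a set $E$ as in (3) exists. Finally, under (3) I would pass to a subset $E^{\,\prime}\subset E$ of positive measure on which $q_{y_0}$ is bounded and put $\eta=|E^{\,\prime}|^{-1}\chi_{E^{\,\prime}}$ in (\ref{eq2*B}); then $Q\cdot\eta^{\,n}$ is integrable over the corresponding subset of the annulus, and the right-hand side of (\ref{eq2*B}) becomes a finite constant independent of the inner radius, which is what the scheme of \cite{SSD} needs.

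With this estimate in hand I would argue by contradiction. Suppose some $C(f,x_0)$ contains two distinct points $y_1,y_2$ (applying the inversion $\psi(y)=y/|y|^{2}$ if necessary, we may take both finite), and fix $0<\varepsilon_0<|y_1-y_2|.$ Pick $x_m\to x_0$ and $x_m^{\,\prime}\to x_0$ with $f(x_m)\to y_1$ and $f(x_m^{\,\prime})\to y_2,$ and join $x_m$ to $x_m^{\,\prime}$ by a continuum $C_m$ in $D$ of diameter tending to $0.$ Then $f(C_m)$ joins a point of $B(y_1,\varepsilon_m),$ with $\varepsilon_m:=|f(x_m)-y_1|\to 0,$ to a point outside $B(y_1,\varepsilon_0);$ invoking the finite connectedness of $D^{\,\prime}$ at $y_1$ to control, after passing to a subsequence, through which component of a small punctured neighbourhood of $y_1$ the curves $f(C_m)$ escape, and then truncating $C_m,$ one obtains an element of $\Gamma_f(y_1,\varepsilon_m,\varepsilon_0).$ Hence $M(\Gamma_f(y_1,\varepsilon_m,\varepsilon_0))$ is bounded above by the previous step. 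On the other hand, the weak flatness of $\partial D$ at $x_0$ — combined with standard topological properties of open, discrete and closed maps — forces the modulus of the corresponding subfamily of $\Gamma_f(y_1,\varepsilon_m,\varepsilon_0)$ to exceed any prescribed number once $m$ is large, a contradiction. Thus $C(f,x_0)$ is a singleton, completing the reduction.

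I expect the main obstacle to be this lower bound: organizing the continua near $x_0$ together with their truncations and concatenations so that the curve family governed by weak flatness is genuinely contained in $\Gamma_f(y_1,\varepsilon_m,\varepsilon_0)$ — the parts of those continua reaching out to a fixed scale must not break the containment — and carrying out correctly the pigeonhole argument on the components of small punctured neighbourhoods of $y_1$ (and, dually, of $x_0$) afforded by the finite connectedness of $D^{\,\prime}$ and of $D.$ A secondary, essentially bookkeeping, matter is the treatment via the inversion $\psi$ of the case when a cluster value or a point of $D^{\,\prime}$ equals $\infty,$ and the verification that the passage to spherical coordinates in (\ref{eq2*B}) is legitimate under the weak integrability provided by hypothesis~(3).
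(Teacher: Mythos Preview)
Your overall architecture---upper bound on $M(\Gamma_f(y_0,\cdot,\cdot))$ from the inverse Poletsky inequality, lower bound from weak flatness, contradiction---matches the paper's, and your handling of the upper bound under (1), (3), (4) is essentially correct (for~(3) the paper substitutes $\eta(t)=\bigl(It\,\widetilde q^{\,1/(n-1)}_{y_0}(t)\bigr)^{-1}$ with $\widetilde Q=\max\{Q,1\}$ rather than a characteristic function, but either choice works). Two remarks, one minor and one substantive.

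\medskip
\emph{Minor.} You misread hypothesis~(2). It does \emph{not} assert that $I(\varepsilon,\varepsilon_0)\to\infty$; it only guarantees that for each $b_0$ there is some $a<b_0$ with $I(a,b_0)>0$. (This is why the paper passes to $\widetilde Q=\max\{Q,1\}$ in case~(2): to ensure $I<\infty$ while keeping $I>0$.) Fortunately this is harmless, because mere boundedness of the modulus already contradicts weak flatness, which yields $M>P$ for \emph{every} $P$. Your dichotomy ``$\to 0$ under (1)--(2), bounded under (3)--(4)'' is unnecessary; ``bounded'' suffices throughout.

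\medskip
\emph{Substantive.} The lower-bound step, which you correctly flag as the obstacle, is not just a matter of bookkeeping: the mechanism you propose does not produce the required continua, and the paper uses a genuinely different idea. Your continua $C_m\subset D$ joining $x_m$ to $x_m'$ have diameter tending to zero, so they cannot meet both $\partial U$ and $\partial V$ for a \emph{fixed} neighbourhood $U$ of $x_0$; weak flatness therefore says nothing about $\Gamma(C_m,\cdot,D)$. If instead you enlarge $C_m$ so that it reaches a fixed scale in $D$, you lose all control over $f(C_m)$, and there is no reason for $\Gamma(E,F,D)$ to be minorized by $\Gamma_f(y_1,\varepsilon_m,\varepsilon_0)$. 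The paper resolves this by working in the \emph{opposite direction}. It first constructs paths $\alpha,\beta$ in $D^{\,\prime}$---this is where finite connectedness of $D^{\,\prime}$ is actually used, to thread a single path through infinitely many of the $f(x_i)$ (respectively $f(y_i)$)---arranged so that $|\alpha|\subset B(z_1,R_*)$ and $|\beta|\subset{\Bbb R}^n\setminus B(z_1,R_0)$ for \emph{fixed} radii $R_*<R_0$. It then \emph{lifts} the subarcs $\alpha_i$ (from $f(x_i)$ to $f(x_1)$) and $\beta_i$ (from $f(y_i)$ to $f(y_1)$) back to $D$ via the total $f$-lifting property of open, discrete, closed maps (\cite[Lemma~3.7]{Vu}). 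The lifts $\widetilde\alpha_i,\widetilde\beta_i$ start at $x_i,y_i\to x_0$ but terminate in the finite set $f^{-1}(\{f(x_1)\})\cup f^{-1}(\{f(y_1)\})$ (\cite[Theorem~2.8]{MS}), hence at a fixed positive distance from $x_0$; and by construction $f(|\widetilde\alpha_i|)\subset B(z_1,R_*)$, $f(|\widetilde\beta_i|)\subset{\Bbb R}^n\setminus B(z_1,R_0)$, so every path in $\Gamma(|\widetilde\alpha_i|,|\widetilde\beta_i|,D)$ has $f$-image crossing the annulus and $\Gamma(|\widetilde\alpha_i|,|\widetilde\beta_i|,D)>\Gamma_f(z_1,R_*,R_0)$. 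Now weak flatness applies directly to the pair $|\widetilde\alpha_i|,|\widetilde\beta_i|$. This lifting step---building the continua in $D^{\,\prime}$ first and pulling them back---is the idea missing from your proposal.
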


\medskip
Let $X$ and $Y$ be metric spaces. Recall that, a mapping
$f:X\rightarrow Y$ is called {\it light,} if for any point $y\in Y,$
the set $f^{\,-1}(y)$ does not contain any nondegenerate continuum
$K\subset X.$ The following statement holds.

\medskip
\begin{theorem}\label{th2}
{\sl\,Let $n\geqslant 2,$ let $p=n$ and let $D$ be a domain with a
weakly flat boundary. Let $f:D\rightarrow D^{\,\prime}$ be an open
discrete and closed mapping of $D$ onto $D^{\,\prime}$ for which
there is a Lebesgue measurable function $Q:{\Bbb R}^n\rightarrow [0,
\infty]$ equals to zero outside $D^{\,\prime}$ such that the
conditions~(\ref{eq2*B})--(\ref{eqB2}) hold for any point $y_0\in
\partial{D^{\,\prime}}.$

\medskip
Assume that, one of the following conditions hold:

\medskip
1) $Q\in FMO(\partial D^{\,\prime});$

\medskip
2) for any $y_0\in \partial D^{\,\prime}$ there is $\delta(y_0)>0$
such that
\begin{equation}\label{eq5D}
\int\limits_{0}^{\delta(y_0)}
\frac{dt}{tq_{y_0}^{\frac{1}{n-1}}(t)}=\infty\,.
\end{equation}

Assume also that, a domain $D^{\,\prime}$ is finitely connected on
its boundary. Then $f$ has a continuous extension
$\overline{f}:\overline{D}\rightarrow \overline{D^{\,\prime}},$
$\overline{f}(\overline{D})=\overline{D^{\,\prime}},$ and it is
light.}
\end{theorem}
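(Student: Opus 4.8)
The plan is to build on Theorem~\ref{th1}, which already gives the continuous extension $\overline{f}:\overline{D}\rightarrow\overline{D^{\,\prime}}$ with $\overline{f}(\overline{D})=\overline{D^{\,\prime}}$ under either of the conditions~1) or~2) (these imply, respectively, conditions~1) and~2) of Theorem~\ref{th1}; in fact the divergence integral~(\ref{eq5D}) is a stronger form of~(\ref{eq5**})). So the only thing left to prove is lightness, and since $f$ itself is already open and discrete, a fortiori light in $D$, the entire issue is concentrated on the boundary: one must show that for no $y_0\in\partial D^{\,\prime}$ can $\overline{f}^{\,-1}(y_0)$ contain a nondegenerate continuum $K\subset\overline{D}$. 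Such a $K$ would necessarily meet $\partial D$, because $f$ is light in $D$; and since $\overline{f}(\overline{D})=\overline{D^{\,\prime}}$, both interior and boundary points of $D^{\,\prime}$ occur as images, so we genuinely must rule out a boundary continuum collapsing to a boundary point.

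The key step is a modulus estimate argued by contradiction. Suppose $K\subset\overline{f}^{\,-1}(y_0)$ is a nondegenerate continuum with $y_0\in\partial D^{\,\prime}$, and pick two distinct points $x_1,x_2\in K$. First I would reduce to the case that $K\cap D$ contains a nondegenerate continuum: if not, $K\subset\partial D$, but then by the continuity of $\overline{f}$ and local arguments one can still extract an appropriate subcontinuum of $D$ mapping into a small ball about $y_0$ — this is where I expect to use that $D$ has a weakly flat boundary (to guarantee the existence of large-modulus families joining neighborhoods of $x_1$ and $x_2$ inside $D$). Having such continua $E,F\subset D$ lying in $\overline{f}^{\,-1}(B(y_0,r))$ for all small $r>0$, with $E$ and $F$ intersecting $\partial U$ and $\partial V$ for suitable neighborhoods $U\supset V$ of a chosen point of $K\cap\partial D$, weak flatness of $\partial D$ gives $M(\Gamma(E,F,D))>P$ for any prescribed $P$. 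On the other hand, $f(\Gamma(E,F,D))$ lies in $\Gamma_f(y_0,r_1,r_2)$-type families for $r_1,r_2$ both small, so the inverse Poletsky inequality~(\ref{eq2*B}) with $p=n$ bounds $M(\Gamma(E,F,D))$ from above by $\int_{A(y_0,r_1,r_2)\cap D^{\,\prime}}Q(y)\,\eta^n(|y-y_0|)\,dm(y)$ for any admissible $\eta$.

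The heart of the matter is then to show that this upper bound can be made arbitrarily small — smaller than the $P$ produced by weak flatness — by choosing $r_1=a\to0$, $r_2=\varepsilon_0$ fixed small, and an appropriate $\eta$. Under condition~2), the standard choice $\eta(t)=\frac{1}{I(a)tq_{y_0}^{1/(n-1)}(t)}$ with $I(a)=\int_a^{\varepsilon_0}\frac{dt}{tq_{y_0}^{1/(n-1)}(t)}$ (which is admissible since $I(a)\to\infty$ by~(\ref{eq5D})) reduces the integral, via Fubini and the definition~(\ref{eq12}) of $q_{y_0}$, to $\omega_{n-1}/I(a)^{n-1}\to0$. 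Under condition~1), $Q\in FMO(\partial D^{\,\prime})$, one invokes the known logarithmic-type estimate for $FMO$ functions (as used in the cited papers \cite{SSD}, \cite{Sev$_2$}): for $FMO$ functions $q_{y_0}(t)$ grows no faster than a power of $\log(1/t)$, so the corresponding integral is again $O(1/(\log\log(1/a)))\to0$. Either way we obtain $M(\Gamma(E,F,D))\to0$, contradicting weak flatness, so no such $K$ exists and $\overline{f}$ is light.

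The main obstacle I anticipate is the careful handling of the \emph{geometric reduction}: ensuring that a nondegenerate continuum $K$ in $\overline{D}$ collapsing to $y_0$ actually forces continua $E,F$ \emph{inside $D$} that both (i) lie in $\overline{f}^{\,-1}$ of a small ball about $y_0$ and (ii) reach across fixed annular neighborhoods of a point of $\partial D$, so that weak flatness applies. This requires using the continuity of $\overline{f}$ up to $\partial D$ together with the local connectedness / finite connectedness hypotheses and the description of prime ends or boundary neighborhoods; it is routine in spirit but technically the delicate part. Once that is in place, the modulus comparison with the two integral estimates (the $FMO$ bound and the divergence-integral bound) is essentially the same computation that appears in the proof of Theorem~\ref{th1} and in \cite{SSD,Sev$_2$}, so I would cite those estimates rather than redo them.
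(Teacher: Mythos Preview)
Your reduction to the boundary and your choice of test functions $\eta$ in cases~1) and~2) match the paper exactly, and the overall idea of a modulus contradiction is correct. However, there is a genuine gap in the upper-bound half of your comparison. You place \emph{both} continua $E,F\subset D$ inside $\overline{f}^{\,-1}(B(y_0,r))$ and then assert that $\Gamma(E,F,D)$ is minorized by a family of the type $\Gamma_f(y_0,r_1,r_2)$. But the inverse Poletsky inequality~(\ref{eq2*B}) controls only paths $\gamma$ in $D$ whose images $f(\gamma)$ actually cross a spherical shell $A(y_0,r_1,r_2)$. When both $E$ and $F$ map into $B(y_0,r)$, a path joining $E$ to $F$ in $D$ may have image lying entirely in $B(y_0,r)$ (indeed, any path inside the connected neighborhood of $K$ you constructed will do this), and then $f(\gamma)$ never meets $S(y_0,r_2)$ for any $r_2>r$. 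Hence the minorization $\Gamma(E,F,D)>\Gamma_f(y_0,r_1,r_2)$ fails and~(\ref{eq2*B}) gives no upper bound. Your later sentence ``choosing $r_1=a\to 0$, $r_2=\varepsilon_0$ fixed small'' is inconsistent with ``$r_1,r_2$ both small'' and cannot be justified with this configuration of $E,F$.

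The paper avoids this by comparing the varying continua $C_j$ near $K$ not against each other but against a \emph{fixed} continuum $F\subset D$ chosen with $y_0\notin f(F)$ (Lemma~\ref{lem1}). Then $f(F)\subset {\Bbb R}^n\setminus B(y_0,1/l_0)$ for some fixed $l_0$, while $f(C_j)\subset B(y_0,1/l)$ with $l\to\infty$; every path from $F$ to $C_j$ therefore has image crossing $A(y_0,1/l,1/l_0)$, and the Poletsky bound applies with $r_1=1/l\to 0$, $r_2=1/l_0$ fixed, giving $M(\Gamma(F,C_j,D))\to 0$. The matching lower bound comes not from weak flatness directly but from the \emph{uniformity} of $D$ (Lemma~\ref{lem4} and Corollary~\ref{cor1}: weakly flat $\Rightarrow$ strongly accessible $\Rightarrow$ uniform), which yields only a fixed positive bound $M(\Gamma(F,C_j,D))\geqslant\delta_2>0$; that already suffices for the contradiction. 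The continua $C_j$ themselves are produced in Lemma~\ref{lem2} via~\cite[Lemma~2.2]{HK}, which supplies connected open neighborhoods $U_j$ of $K$ in $D$; this is the precise mechanism behind the ``geometric reduction'' you anticipated.
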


\medskip
Given a mapping $f:D\,\rightarrow\,{\Bbb R}^n,$ a set $E\subset D$
and $y\,\in\,{\Bbb R}^n,$ we define the {\it multiplicity function
$N(y,f,E)$} as a number of preimages of the point $y$ in a set $E,$
i.e.
$$
N(y,f,E)\,=\,{\rm card}\,\left\{x\in E: f(x)=y\right\}\,,
$$
\begin{equation}\label{eq1G}
N(f,E)\,=\,\sup\limits_{y\in{\Bbb R}^n}\,N(y,f,E)\,.
\end{equation}
Note that, the concept of a multiplicity function may also be
extended to sets belonging to the closure of $E.$

\medskip
\begin{theorem}\label{th4}
{\sl\, Let $n\geqslant 2,$ let $D$ be a domain with a weakly flat
boundary and let $D^{\,\prime}$ be a domain which is locally
connected on its boundary. Let $f$ be open discrete and closed
mapping of $D$ onto $D^{\,\prime}$ for which there is a Lebesgue
measurable function $Q:{\Bbb R}^n\rightarrow [0, \infty],$ equal to
zero outside $D^{\,\prime},$ such that the
relations~(\ref{eq2*B})--(\ref{eqB2}) with $p=n$ hold at any point
$y_0\in
\partial D^{\,\prime}.$ Assume that, one of the following conditions hold:

\medskip
1) $Q\in FMO(\partial D^{\,\prime});$

\medskip
2) for any $y_0\in \partial D^{\,\prime}$ there is $\delta(y_0)>0$
such that
\begin{equation}\label{eq5B}
\int\limits_{0}^{\delta(y_0)}
\frac{dt}{tq_{y_0}^{\frac{1}{n-1}}(t)}=\infty\,.
\end{equation}
Then $f$ has a continuous extension
$\overline{f}:\overline{D}\rightarrow \overline{D^{\,\prime}}$ such
that $N(f, D)=N(f, \overline{D})<\infty$ and
$\overline{f}(\overline{D})=\overline{D^{\,\prime}}.$ In particular,
$\overline{f}$ is discrete in $\overline{D}.$ }
\end{theorem}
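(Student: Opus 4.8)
The plan is to combine Theorem~\ref{th2} (lightness of $\overline{f}$) with a degree/multiplicity argument for closed mappings. First I would invoke Theorem~\ref{th2} (whose hypotheses are implied here, since local connectedness at the boundary implies finite connectedness there) to obtain the continuous extension $\overline{f}:\overline{D}\rightarrow\overline{D^{\,\prime}}$ with $\overline{f}(\overline{D})=\overline{D^{\,\prime}}$, which is light: no nondegenerate continuum lies in a fiber $\overline{f}^{\,-1}(y)$. Separately, since $f:D\to D^{\,\prime}$ is open, discrete, and closed, it has a well-defined (finite, constant) topological degree, and in particular $N(f,D)<\infty$; call this number $N$. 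The heart of the matter is to show that no \emph{new} preimages are created on the boundary, i.e. $N(f,\overline{D})=N(f,D)=N$, and then to deduce discreteness of $\overline{f}$ on $\overline{D}$.

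The key step is to bound the number of boundary preimages. Fix $y_0\in\partial D^{\,\prime}$ and suppose for contradiction that $\overline{f}^{\,-1}(y_0)\cap\overline{D}$ contains at least $N+1$ distinct points $x_1,\dots,x_{N+1}$ (necessarily some lie in $\partial D$, since at most $N$ can lie in $D$). Choose pairwise disjoint neighborhoods $U_i$ of the $x_i$. Using the weak flatness of $\partial D$ and the continuity of $\overline{f}$, I would pick, inside each $U_i$, continua $E_i, F_i\subset D$ clustering at $x_i$ whose images under $f$ stay in a small ball $B(y_0,\varepsilon)$, so that the modulus $M(\Gamma(E_i,F_i,D))$ is as large as desired. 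On the other hand, via the inverse Poletsky inequality~(\ref{eq2*B}) at $y_0$ together with condition 1) or 2) (exactly the mechanism used to prove Theorem~\ref{th1} and Theorem~\ref{th2}: an FMO or integral-divergence estimate forces the right-hand integral to be finite/small as the annulus shrinks), the modulus of the family of paths in $D$ whose images join $S(y_0,r_1)$ to $S(y_0,r_2)$ in the small annulus is bounded above, uniformly. Combining these with a counting argument — each of the $N+1$ disjoint ``necks'' at the $x_i$ contributes a separate path family mapped into the same thin spherical ring, but $f$ has degree $N$, so at most $N$ sheets can cover a generic value — yields the contradiction. This is essentially the argument of~\cite[Corollary~4.3]{Vu} for quasiregular mappings, transplanted to the present modulus setting.

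I expect the main obstacle to be this last counting/degree bookkeeping: making precise why the modulus upper bound from~(\ref{eq2*B}), which controls \emph{all} paths in $D$ mapping into a given annulus around $y_0$, forces the number of distinct boundary preimages to be at most $N(f,D)$. One clean way is: for a value $y$ near $y_0$ which is a regular value with exactly $N$ preimages in $D$, the local-to-global degree theory for open discrete closed maps gives that each such $y$ has exactly $N$ preimages; if $\overline{f}^{\,-1}(y_0)$ had $N+1$ points, continuity would force, for $y$ close enough to $y_0$, at least $N+1$ preimages of $y$ distributed among the disjoint $U_i$, contradicting $N(f,D)=N$. Thus $\overline{f}^{\,-1}(y_0)$ is finite for every $y_0$, hence (its points being isolated, as any nondegenerate subcontinuum is excluded by lightness and finiteness gives isolation automatically) $\overline{f}$ is discrete on $\overline{D}$; and $N(f,\overline{D})=N(f,D)<\infty$. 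Finally I would note $\overline{f}(\overline{D})=\overline{D^{\,\prime}}$ is already contained in Theorem~\ref{th2}, completing the proof.
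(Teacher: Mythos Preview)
Your overall plan matches the paper's: assume $\overline f^{\,-1}(y_0)$ contains $N+1$ points $x_1,\dots,x_{N+1}$ with $N=N(f,D)$ and derive a contradiction by producing $N+1$ preimages in $D$ of some nearby value. The gap is in your ``clean way'': continuity of $\overline f$ only says that points near $x_i$ map near $y_0$; it does \emph{not} say that each neighborhood $U_i$ contains a preimage of a given $y$ close to $y_0$. That is a local \emph{surjectivity} statement, and it is precisely where the work lies. The paper obtains it as follows: using local connectedness of $\partial D^{\,\prime}$, pick connected $U^{\,\prime}_p\subset B(y_0,1/p)\cap D^{\,\prime}$; for each $i$ there is a component $V^i_p$ of $f^{-1}(U^{\,\prime}_p)$ with $x_i\in\overline{V^i_p}$ (local connectedness of $D$ at $\partial D$ follows from weak flatness), and \cite[Lemma~3.6]{Vu} gives $f(V^i_p)=U^{\,\prime}_p$. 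This yields $N+1$ preimages of any $y\in U^{\,\prime}_p$ \emph{only once one knows the $V^i_p$ are pairwise disjoint}, and proving that is the heart of the matter. The paper does it via Lemma~\ref{lem1}: if some $\overline{V^{i_0}_{p_m}}$ had chordal diameter bounded below, one builds paths $C_m\subset V^{i_0}_{p_m}$ with $h(|C_m|)\geqslant\delta$ but $h(f(|C_m|))\to 0$ and $h(f(|C_m|),y_0)\to 0$, contradicting that lemma (which rests on the uniformity of $D$ from Corollary~\ref{cor1} together with the Poletsky bound and the FMO/divergence condition on $Q$). Your modulus sketch with $E_i,F_i$ both clustering at $x_i$ does not fit this mechanism: the paths joining them need not have images crossing any annulus around $y_0$, so~(\ref{eq2*B}) gives no control on $M(\Gamma(E_i,F_i,D))$.

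Your detour through Theorem~\ref{th2} is not what the paper does, but it can be turned into a genuine alternative: choosing the $U^{\,\prime}_p$ nested makes the $V^i_p$ nested, so $\bigcap_p\overline{V^i_p}$ is a continuum in $\overline f^{\,-1}(y_0)$ containing $x_i$, hence by lightness equals $\{x_i\}$, whence $h(\overline{V^i_p})\to 0$ by a compactness argument. Either route, however, requires you to set up the components $V^i_p$, invoke \cite[Lemma~3.6]{Vu}, and prove their disjointness; the sentence ``continuity would force $N+1$ preimages'' is exactly the missing step, not an argument for it.
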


\medskip
Since closed quasiregular mappings satisfy the
relation~(\ref{eq2*B}) with $p=n,$ $Q=K\cdot N(f, D)<\infty$ and
dome $K\geqslant 1$ (see Theorem~3.2 in \cite{MRV}),
Theorem~\ref{th4} immediately implies the following consequence.

\medskip
\begin{corollary}\label{cor2}
{\sl\, Let $n\geqslant 2,$ let $D$ be a domain with a weakly flat
boundary and let $D^{\,\prime}$ be a domain which is locally
connected on its boundary. Let $f$ be open discrete and closed
quasiregular mapping of $D$ onto $D^{\,\prime}.$ Then $f$ has a
continuous extension $\overline{f}:\overline{D}\rightarrow
\overline{D^{\,\prime}}$ such that $N(f, D)=N(f,
\overline{D})<\infty$ and
$\overline{f}(\overline{D})=\overline{D^{\,\prime}}.$ In particular,
$\overline{f}$ is discrete in $\overline{D}.$}
\end{corollary}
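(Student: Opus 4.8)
\proof
The plan is to obtain Corollary~\ref{cor2} as a direct specialization of Theorem~\ref{th4}, the only task being to produce an admissible majorant $Q$ meeting condition~1) of that theorem. First I would record the structural facts already collected in the text preceding the corollary: a closed quasiregular mapping $f$ of $D$ onto $D^{\,\prime}$ has constant local index on $D^{\,\prime}$, so its multiplicity $N(f,D)$ is finite, and by Theorem~3.2 in~\cite{MRV} such an $f$ satisfies the inverse Poletsky inequality~(\ref{eq2*B}) with $p=n$ and with the \emph{constant} majorant $Q(y)\equiv K\cdot N(f,D)$, where $K\geqslant 1$ is the outer dilatation coefficient. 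Since the integration in~(\ref{eq2*B}) extends only over $A(y_0,r_1,r_2)\cap f(D)$, we may harmlessly replace $Q$ by $Q\cdot\chi_{D^{\,\prime}}$, so that $Q$ vanishes outside $D^{\,\prime}$ as required by the hypotheses of Theorem~\ref{th4}, and~(\ref{eq2*B})--(\ref{eqB2}) with $p=n$ continue to hold at every $y_0\in\partial D^{\,\prime}$.

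Second, I would verify that this $Q$ fulfils hypothesis~1), i.e. $Q\in FMO(\partial D^{\,\prime})$. This is immediate: $Q$ is bounded, $0\leqslant Q\leqslant C$ with $C:=K\cdot N(f,D)<\infty$, hence for every $x_0$ and every $\varepsilon>0$ one has $\frac{1}{\Omega_n\varepsilon^n}\int_{B(x_0,\varepsilon)}|Q(x)-\overline{Q}_\varepsilon|\,dm(x)\leqslant 2C<\infty$; in particular any bounded measurable function belongs to $FMO$ at each point. Since $D$ has a weakly flat boundary and $D^{\,\prime}$ is locally connected at its boundary, all hypotheses of Theorem~\ref{th4} are now in force.

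Finally, Theorem~\ref{th4} yields a continuous extension $\overline{f}:\overline{D}\rightarrow\overline{D^{\,\prime}}$ with $N(f,D)=N(f,\overline{D})<\infty$, $\overline{f}(\overline{D})=\overline{D^{\,\prime}}$, and discreteness of $\overline{f}$ in $\overline{D}$, which is exactly the assertion of the corollary. There is essentially no obstacle in this argument: the entire analytic content is carried by Theorem~\ref{th4}, and the only two points deserving a remark are the finiteness of $N(f,D)$ for closed quasiregular maps (a standard fact about the degree) and the trivial observation that a bounded majorant lies in $FMO$.
$\Box$
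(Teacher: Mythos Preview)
Your proposal is correct and follows essentially the same approach as the paper: the text immediately preceding the corollary notes that closed quasiregular mappings satisfy~(\ref{eq2*B}) with $p=n$ and the constant majorant $Q=K\cdot N(f,D)<\infty$ (by \cite[Theorem~3.2]{MRV}), so Theorem~\ref{th4} applies directly. Your added remarks---truncating $Q$ by $\chi_{D^{\,\prime}}$ and checking that a bounded function lies in $FMO$---are reasonable elaborations of details the paper leaves implicit.
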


\medskip
\begin{remark}
For quasiregular mappings of the unit ball $D={\Bbb B}^n$, the
assertion of Theorem~\ref{th4} was proved in~\cite[Theorem~4.7]{Vu}.
Apparently, the proof of this assertion in the case of an arbitrary
domain has not been given. We also note some cases in which the
assertion of Theorem~\ref{th4} was proved earlier for the case of
homeomorphisms, see~\cite[Theorems~3 and 5]{Sm}
and~\cite[Theorems~4.6 and 13.5]{MRSY}. We also note several rather
important papers concerning continuous extension to the boundary,
lightness, and discreteness of mappings with modulus conditions,
cf.~\cite{Cr$_1$}--\cite{Cr$_3$}.

\end{remark}

\section{Main lemmas on a continuous boundary extension}

Let $D$ be a domain in ${\Bbb R}^n,$ $n\geqslant 2,$ and let $f:D
\rightarrow {\Bbb R}^n$ (or $f:D\rightarrow \overline{{\Bbb R}^n})$
be a discrete open mapping, $\beta: [a,\,b)\rightarrow {\Bbb R}^n$
be a path, and $x\in\,f^{-1}(\beta(a)).$ Recall that, a path
$\alpha: [a,\,b)\rightarrow D$ is called a {\it total $f$-lifting}
of $\beta$ starting at $x,$ if $(1)\quad \alpha(a)=x\,;$ $(2)\quad
(f\circ\alpha)(t)=\beta(t)$ for any $t\in [a, b).$ Note that, any
paths always have total liftings under open discrete and closed
mappings (see \cite[Lermma~3.7]{Vu}).

As usual, the following set is called the {\it locus} of a path
$\gamma: I\rightarrow {\Bbb R}^n:$
$$|\gamma|=\{x\in {\Bbb R}^n: \exists\, t\in [a, b]:
\gamma(t)=x\}\,.$$

The following Lemmas are very close to Theorem~4 in \cite{Sev$_2$},
cf.~\cite[Theorem~3.1]{SSD}. For completeness, we present its proof
in the text of the article in full.

\medskip
\begin{lemma}\label{lem5}{\sl\, Let $D\subset {\Bbb R}^n,$
$n\geqslant 2, $ be a domain with a weakly flat boundary, and let
$D^{\,\prime}\subset {\Bbb R}^n$ be finitely connected at its
boundary. Suppose that $f$ is open discrete and closed mapping of
$D$ onto $D^{\,\prime}$ satisfying the relation~(\ref{eq2*B}) at any
point $y_0\in D^{\,\prime}$ with $p=n.$ In addition, assume that,
for any $z_1\in\partial D^{\,\prime},$ there is
$\varepsilon_0=\varepsilon_0(z_1)> 0$ and a Lebesgue measurable
function $\psi:(0, \varepsilon_0)\rightarrow [0, \infty]$ such that
\begin{equation}\label{eq7B} I(\varepsilon,
\varepsilon_0):=\int\limits_{\varepsilon}^{\varepsilon_0}\psi(t)\,dt
< \infty\quad \forall\,\,\varepsilon\in (0, \varepsilon_0)\,,
\end{equation}
in addition, for any $0<b_0<\varepsilon_0$ there is $0<a\leqslant
b_0$ such that
\begin{equation}\label{eq7D}
I(a, b_0)>0
\end{equation}
besides that,
\begin{equation} \label{eq7C}
\int\limits_{A(z_1, a, b_0)}
Q(x)\cdot\psi^{\,n}(|x-z_1|)\,dm(x)\leqslant C_0I^n(a,
b_0)\,,\end{equation}
where $C_0=C_0(a, b_0)>0$ is some constant which may depend on $a$
and $b_0,$ and $A(z_1, a, b_0)$ is defined in~(\ref{eq1**}). Then
$f$ has a continuous extension
$\overline{f}:\overline{D}\rightarrow\overline{D^{\,\prime}},$ while
$\overline{f}(\overline{D})=\overline{D^{\,\prime}}.$ }
\end{lemma}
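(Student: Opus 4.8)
The plan is to reduce the problem to showing that $\overline{f}$ extends continuously to each boundary point $x_0 \in \partial D$, and that the boundary is carried into $\partial D^{\,\prime}$ with the claimed surjectivity. First I would fix $x_0 \in \partial D$ and, using that $f$ is closed, recall (as in \cite{Sev$_2$}, \cite{SSD}) that the cluster set $C(f, x_0) := \{z : \exists\, x_k \to x_0,\ f(x_k) \to z\}$ is a subset of $\partial D^{\,\prime}$; this follows since a closed map sends sequences leaving compacta to sequences leaving compacta. The heart of the matter is to prove that $C(f, x_0)$ is a single point. Suppose not: then there are two sequences $x_k \to x_0$, $x_k^{\,\prime} \to x_0$ with $f(x_k) \to z_1 \in \partial D^{\,\prime}$ and $f(x_k^{\,\prime}) \to z_2 \ne z_1$. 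Using local connectedness of $D$ at $x_0$ (which follows from weak flatness), I would join $x_k$ to $x_k^{\,\prime}$ inside $D$ by a connected set $K_k$ contained in an arbitrarily small neighborhood $V$ of $x_0$; simultaneously I choose a fixed continuum $C$ deep inside $D$ far from $x_0$, so that $\Gamma(K_k, C, D)$ has modulus bounded below by weak flatness: $M(\Gamma(K_k, C, D)) \ge P$ for $k$ large.

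Next I would estimate the same modulus from above through the Poletsky inequality on the image side. The images $f(K_k)$ are continua in $D^{\,\prime}$ that, for large $k$, meet both a small sphere $S(z_1, a)$ and a larger sphere $S(z_1, b_0)$ around $z_1$ (since $f(x_k) \to z_1$ but $z_2 \notin \overline{B(z_1, b_0)}$ once $b_0$ is small), while $f(C)$ is a fixed continuum lying outside $\overline{B(z_1, \varepsilon_0)}$ for $\varepsilon_0$ small enough. Hence $\Gamma(K_k, C, D) > \Gamma_f(z_1, a, b_0)$ up to taking subpaths, so by \eqref{eq2*B} with $p = n$ and the test function $\eta(t) = \psi(t)/I(a,b_0)$ on $(a,b_0)$ (which satisfies \eqref{eqB2} by \eqref{eq7D}),
$$
M(\Gamma(K_k, C, D)) \le \frac{1}{I^n(a,b_0)} \int\limits_{A(z_1, a, b_0)} Q(y)\, \psi^n(|y - z_1|)\, dm(y) \le C_0
$$
by the key hypothesis \eqref{eq7C}. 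Thus $P \le C_0$ for every $P > 0$, a contradiction. This forces $C(f, x_0)$ to be a singleton, so setting $\overline{f}(x_0)$ equal to that point gives a well-defined extension to $\overline{D}$; its continuity at each boundary point follows by the usual argument that the diameter of $f(V \cap D)$ shrinks, again using finite connectedness of $D^{\,\prime}$ at its boundary to control the cluster set of the extension.

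Finally, for the surjectivity claim $\overline{f}(\overline{D}) = \overline{D^{\,\prime}}$: the inclusion $\subseteq$ is automatic from $C(f,x_0) \subseteq \partial D^{\,\prime}$ and $f(D) = D^{\,\prime}$; for $\supseteq$, given $w \in \partial D^{\,\prime}$ pick $w_k \in D^{\,\prime}$ with $w_k \to w$, choose preimages $x_k \in f^{-1}(w_k)$, pass to a subsequence $x_k \to x_0 \in \overline{D}$ (compactness of $\overline{D}$ in $\overline{{\Bbb R}^n}$); since $w \notin D^{\,\prime}$ and $f$ is closed, $x_0 \in \partial D$, whence $\overline{f}(x_0) = w$. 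The step I expect to be the genuine obstacle is the upper modulus estimate: one must verify carefully that the image curves $f(K_k)$ really do separate the two spheres around $z_1$ for all large $k$ (so that $\Gamma(K_k, C, D)$ is minorized by $\Gamma_f(z_1, a, b_0)$), handling the possibility $z_1 = \infty$ via the inversion $\psi(y) = y/|y|^2$ as indicated after \eqref{eqB2}, and choosing $\varepsilon_0$, $b_0$, $a$ in the right order so that all of \eqref{eq7B}, \eqref{eq7D}, \eqref{eq7C} are simultaneously in force. The remaining ingredients — reducing conditions 1)--4) of Theorem~\ref{th1} to the hypotheses \eqref{eq7B}--\eqref{eq7C} of this lemma — are standard: for $Q \in FMO$ one uses the known logarithmic modulus bound, for the divergence integral one takes $\psi = 1/(t q_{z_1}^{1/(n-1)}(t))$, and for the $L^1$ or spherical-integrability conditions one uses Fubini together with a suitable choice of $\psi$ supported on the good set $E$.
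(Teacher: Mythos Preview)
Your argument has a genuine gap at the lower-bound step. Weak flatness at $x_0$ asserts that for every $P>0$ and every neighborhood $U$ of $x_0$ there is a smaller neighborhood $V\subset U$ such that $M(\Gamma(E,F,D))>P$ whenever \emph{both} continua $E,F\subset D$ meet $\partial U$ and $\partial V$. In your setup the continuum $K_k$ lies inside an arbitrarily small neighborhood of $x_0$ (so it does not reach $\partial U$ for a fixed $U$), while $C$ is fixed and far from $x_0$ (so it does not meet $\partial V$). Hence weak flatness says nothing about $M(\Gamma(K_k,C,D))$; in fact this modulus is bounded and tends to $0$ as $K_k$ shrinks to a point, so the claim $M(\Gamma(K_k,C,D))\geqslant P$ for every $P$ is simply false. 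The upper-bound step breaks too: a path $\gamma\in\Gamma(K_k,C,D)$ may start at a point of $K_k$ whose image is already near $z_2$, outside $B(z_1,b_0)$, and end in $f(C)\subset{\Bbb R}^n\setminus B(z_1,b_0)$, so $f(\gamma)$ need not enter $B(z_1,b_0)$ at all and the minorization $\Gamma(K_k,C,D)>\Gamma_f(z_1,a,b_0)$ fails.

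The paper repairs both issues at once by working on the image side first and then lifting --- and this is exactly where the finite connectedness of $D^{\,\prime}$, which you never actually invoke, enters. Using that hypothesis one threads paths $\alpha,\beta\subset D^{\,\prime}$ through subsequences of $f(x_k)$ and $f(x_k^{\,\prime})$, arranged so that $|\alpha|\subset B(z_1,R_*)$ and $|\beta|\subset{\Bbb R}^n\setminus B(z_1,R_0)$ with $0<R_*<R_0<\varepsilon_0$. The subpaths $\alpha_i,\beta_i$ running from $f(x_i),f(y_i)$ back to the fixed points $f(x_1),f(y_1)$ are then \emph{lifted} through the open--discrete--closed map $f$ to paths $\widetilde{\alpha_i},\widetilde{\beta_i}\subset D$. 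These lifts start at $x_i,y_i\rightarrow x_0$ but terminate in the finite set $f^{-1}(\{f(x_1),f(y_1)\})$, hence outside a fixed ball $B(x_0,r_0)$. Now \emph{both} continua $|\widetilde{\alpha_i}|$ and $|\widetilde{\beta_i}|$ stretch from inside $V$ to outside $B(x_0,r_0)$, so weak flatness legitimately yields $M(\Gamma(|\widetilde{\alpha_i}|,|\widetilde{\beta_i}|,D))>P$. And since $f(|\widetilde{\alpha_i}|)=|\alpha_i|\subset B(z_1,R_*)$ while $f(|\widetilde{\beta_i}|)=|\beta_i|\subset{\Bbb R}^n\setminus B(z_1,R_0)$, every curve joining them has image crossing the annulus $A(z_1,R_*,R_0)$, which gives the minorization by $\Gamma_f(z_1,R_*,R_0)$ and hence the upper bound $C_0$ via~(\ref{eq2*B}) and~(\ref{eq7C}). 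This lifting construction is the missing idea; without it neither half of the contradiction goes through.
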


\begin{proof}
Assume that the conclusion of Lemma~\ref{lem5} is not true. Then
there are $x_0\in
\partial D$ and at least two sequences $x_i, y_i\in D,$ $i=1,2,\ldots ,$ such
that $x_i, y_i\rightarrow x_0$ as $i\rightarrow\infty,$ while the
relation
\begin{equation}\label{eq1B}
h(f(x_i), f(y_i))\geqslant a>0
\end{equation}
holds for some $a>0$ and all $i\in {\Bbb N},$ where $h$ is a chordal
distance in $\overline{{\Bbb R}^n},$ see~(\ref{eq3C}). Since
$\overline{{\Bbb R}^n}$ is a compact space, we may choose the
sequences $f(x_i)$ and $f(y_i)$ converge to $z_1$ and $z_2$  as
$i\rightarrow\infty $ respectively. Let us assume that
$z_1\ne\infty.$ Since $f$ is closed, it is boundary preserving
(see~\cite[Theorem~3.3]{Vu}), so that $z_1, z_2\in
\partial D^{\,\prime}.$ Since
$D^{\,\prime}$ is finitely connected at its boundary, there are
paths $\alpha:[0, 1)\rightarrow D^{\,\prime}$ and $\beta:[0,
1)\rightarrow D^{\,\prime}$ such that $\alpha\rightarrow z_1$ and
$\beta\rightarrow z_2$ as $t\rightarrow 1-0$ such that $|\alpha|$
contains some subsequence of $f(x_i)$ and $\beta$ contains some
subsequence of $f(y_i),$ $i=1,2,\ldots .$ Without loss of
restriction, using a relabeling, we may assume that $\alpha$ and
$\beta$ contain sequences $f(x_i)$ and $f(y_i),$ respectively. In
addition, we may assume that
\begin{equation}\label{eq8A}
|\alpha|\subset B(z_1, R_*),\, |\beta|\subset {\Bbb R}^n\setminus
B(z_1, R_0)\,,\quad 0<R_*<R_0<\varepsilon_0\,.
\end{equation}
Due to the relation~(\ref{eq7D}), we may assume that
$\int\limits_{R_*}^{R_0}\psi(t)\,dt>0.$
Denote by $\alpha_i$ the subpath of $\alpha$ starting at $f(x_i)$
and ending at $f(x_1),$ and, similarly, denote by $\beta_i$ the
subpath of $\beta$ starting at $f(y_i)$ and ending at $f(y_1).$
Using the change of parameter, we may assume that $\alpha_i:[0,
1]\rightarrow D^{\,\prime}$ and $\beta_i:[0, 1]\rightarrow
D^{\,\prime}.$ Due to~\cite[Lemma~3.7]{Vu}, the paths
$\alpha_i|_{[0, 1)}$ and $\beta_i|_{[0, 1)}$ have total $f$-liftings
starting at points $x_i$ and $y_i,$ respectively. Arguing similarly
to the proof of Lemma~2.1 in \cite{Sev$_3$}, we may show that these
liftings have continuous extensions to $[0, 1].$ Thus, the paths
$\alpha_i$ and $\beta_i$ have total $f$-liftings starting at points
$x_i$ and $y_i,$ as well. Denote these liftings by
$\widetilde{\alpha_i}:[0, 1]\rightarrow D$ and
$\widetilde{\beta_i}:[0, 1]\rightarrow D.$ Observe that, the points
$f(x_1)$ and $f(y_1)$ have at most a finite number of pre-images in
$D$ under $f,$ see~\cite[Theorem~2.8]{MS}. Then there is $r_0>0$
such that $\widetilde{\alpha_i}(1), \widetilde{\beta_i}(1)\in
D\setminus B(x_0, r_0)$ for any $i=1,2,\ldots .$ Since the boundary
of $D$ is weakly flat, for any $P>0$ there is $i=i_P\geqslant 1$
such that the relation
\begin{equation}\label{eq7}
M(\Gamma(|\widetilde{\alpha_i}|, |\widetilde{\beta_i}|,
D))>P\qquad\forall\,\,i\geqslant i_P\,.
\end{equation}
holds. On the other hand, due to~\cite[Theorem~1.I.5.46]{Ku}
\begin{equation}\label{eq9A}
f(\Gamma(|\widetilde{\alpha_i}|, |\widetilde{\beta_i}|,
D))>\Gamma(S(z_1, R_*), S(z_1, R_0), A(z_1, R_*, R_0))\,.
\end{equation}
It follows from~(\ref{eq9A}) that
\begin{equation}\label{eq10B}
\Gamma(|\widetilde{\alpha_i}|, |\widetilde{\beta_i}|, D)
>\Gamma_f(z_0, R_*, R_0)\,.
\end{equation}
In turn, by~(\ref{eq10B}) we have the following:
$$M(\Gamma(|\widetilde{\alpha_i}|, |\widetilde{\beta_i}|, D))$$
\begin{equation}\label{eq11B}
\leqslant M(\Gamma_f(z_1, R_*, R_0))\leqslant \int\limits_{A}
Q(y)\cdot \eta^n (|y-z_1|)\, dm(y)\,,
\end{equation}
where $A=A(z_1, R_*, R_0)$ and $\eta$ is nonnegative Lebesgue
measurable function satisfying the relation~(\ref{eqB2}) for
$r_1:=R_*$ and $r_2:=R_0.$ Put
$$\eta(t)=\left\{
\begin{array}{rr}
\psi(t)/I(R_*, R_0), & t\in (R_*, R_0)\,,\\
0,  &  t\not\in (R_*, R_0)\,,
\end{array}
\right. $$
where $I(R_*, R_0)=\int\limits_{R_*}^{R_0}\,\psi (t)\, dt.$ Observe
that
$\int\limits_{R_*}^{R_0}\eta(t)\,dt=1.$ Now, by the definition of
$f$ in~(\ref{eq2*B}) and due to the relation~(\ref{eq11B}) we obtain
that
\begin{equation}\label{eq14C}
M(\Gamma(|\widetilde{\alpha_i}|, |\widetilde{\beta_i}|, D))\leqslant
C_0<\infty\qquad \forall\quad i\in {\Bbb N}\,.
\end{equation}
The relation~(\ref{eq14C}) contradicts with~(\ref{eq7}). The
resulting contradiction indicates the falsity of the assumption made
in~(\ref{eq1B}).

The proof of the equality
$\overline{f}(\overline{D})=\overline{D^{\,\prime}}$ is similar to
the second part of the proof of Theorem~3.1 in~\cite{SSD}.~$\Box$
\end{proof}

\medskip
\begin{lemma}\label{lem6}{\sl\, Let $D\subset {\Bbb R}^n,$
$n\geqslant 2, $ be a domain with a weakly flat boundary, and let
$D^{\,\prime}\subset {\Bbb R}^n$ be finitely connected at its
boundary. Suppose that $f$ is open discrete and closed mapping of
$D$ onto $D^{\,\prime}$ satisfying the relation~(\ref{eq2*B}) at any
point $y_0\in \partial D^{\,\prime}$ with $p=n.$ In addition, assume
that, for any point $y_0\in D^{\,\prime}$ and
$0<r_1<r_2<r_0:=\sup\limits_{y\in D^{\,\prime}}|y-y_0|$ there is a
set of a positive linear Lebesgue measure $E\subset[r_1, r_2]$ such
that a function $Q$ is integrable on $S(y_0, r)$  for any $r\in E.$
Then $f$ has a continuous extension
$\overline{f}:\overline{D}\rightarrow\overline{D^{\,\prime}},$ while
$\overline{f}(\overline{D})=\overline{D^{\,\prime}}.$ }
\end{lemma}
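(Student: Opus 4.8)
The plan is to derive Lemma~\ref{lem6} from Lemma~\ref{lem5} by producing, for each boundary point $z_1\in\partial D^{\,\prime}$, a weight $\psi$ satisfying the hypotheses~(\ref{eq7B})--(\ref{eq7C}). The key point is that integrability of $Q$ on the spheres $S(z_1,r)$ for $r$ in a set of positive measure forces the quantity $q_{z_1}(r)$ to be finite on that set, and this is enough to run a Fubini-type argument that bounds the annular integral in~(\ref{eq7C}) by a suitable power of $\int\psi$.

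First I would fix $z_1\in\partial D^{\,\prime}$ and choose $\varepsilon_0=\varepsilon_0(z_1)>0$ small enough that, say, $B(z_1,\varepsilon_0)$ is a proper subset of some ball containing a tail of the relevant sequences; then for $0<r_1<r_2<\varepsilon_0$ the hypothesis gives a set $E=E(r_1,r_2)\subset[r_1,r_2]$ of positive linear measure on which $r\mapsto\int_{S(z_1,r)}Q\,d\mathcal H^{n-1}$ is finite, i.e. $q_{z_1}(r)<\infty$ for $r\in E$. Next I would define $\psi$ to be the indicator of $E$: $\psi(t)=\chi_{E}(t)$ on $(0,\varepsilon_0)$ (more precisely, on each dyadic-type window one uses the portion of $E$ lying in it). Then $I(\varepsilon,\varepsilon_0)=\int_\varepsilon^{\varepsilon_0}\chi_E(t)\,dt=|E\cap(\varepsilon,\varepsilon_0)|<\infty$, giving~(\ref{eq7B}); and for any $0<b_0<\varepsilon_0$, since $E$ has positive measure, one can find $0<a\leqslant b_0$ with $|E\cap(a,b_0)|>0$, i.e. $I(a,b_0)>0$, giving~(\ref{eq7D}).

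The remaining and slightly delicate step is~(\ref{eq7C}). Writing the annular integral in spherical coordinates,
\begin{equation*}
\int\limits_{A(z_1,a,b_0)}Q(x)\,\psi^{\,n}(|x-z_1|)\,dm(x)=\int\limits_{a}^{b_0}\psi^{\,n}(t)\Bigl(\int\limits_{S(z_1,t)}Q(x)\,d\mathcal H^{n-1}(x)\Bigr)dt=\omega_{n-1}\int\limits_{a}^{b_0}\psi^{\,n}(t)\,t^{\,n-1}q_{z_1}(t)\,dt\,.
\end{equation*}
Now $\psi^{\,n}=\chi_E^{\,n}=\chi_E$, and on $E$ the integrand is finite; but to get a bound of the form $C_0 I^n(a,b_0)$ one must control $\int_a^{b_0}\chi_E(t)\,t^{\,n-1}q_{z_1}(t)\,dt$, which is a fixed finite number $C_1=C_1(a,b_0)$ because $q_{z_1}$ is finite a.e. on $E$ and the integral is over the compact set $[a,b_0]$ against a finite measure. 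Since $I(a,b_0)>0$ is a fixed positive number, setting $C_0:=C_1/I^n(a,b_0)$ gives~(\ref{eq7C}) with $C_0=C_0(a,b_0)$, exactly as the statement of Lemma~\ref{lem5} permits. I expect the main obstacle to be ensuring that $\int_a^{b_0}\chi_E(t)\,t^{\,n-1}q_{z_1}(t)\,dt$ is genuinely finite: $q_{z_1}$ is finite on $E$ but need not be integrable, so one may have to shrink $E$ to a subset $E'\subset E$ of still-positive measure on which $q_{z_1}$ is bounded (this is always possible, e.g. $E'=\{r\in E: q_{z_1}(r)\leqslant N\}$ for $N$ large enough that $|E'|>0$), and use $E'$ in place of $E$ throughout. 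With this adjustment, all hypotheses of Lemma~\ref{lem5} are met, and the conclusion — existence of the continuous extension $\overline f:\overline D\to\overline{D^{\,\prime}}$ with $\overline f(\overline D)=\overline{D^{\,\prime}}$ — follows immediately.
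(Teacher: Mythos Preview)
Your reduction to Lemma~\ref{lem5} via the indicator weight $\psi=\chi_{E'}$ is correct. The two points you flag are indeed the only delicate ones: (i) one must shrink $E$ to a positive-measure subset $E'$ on which $q_{z_1}$ is \emph{bounded}, so that the radial integral in~(\ref{eq7C}) is genuinely finite; and (ii) since Lemma~\ref{lem5} demands a \emph{single} function $\psi$ on $(0,\varepsilon_0)$ satisfying~(\ref{eq7D}) for every $b_0$, the dyadic patching you mention is required --- place a positive-measure piece $E_k'\subset[\varepsilon_0 2^{-k-1},\varepsilon_0 2^{-k}]$ in each block and set $\psi=\chi_{\cup_k E_k'}$. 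Given $b_0$, choosing $a$ so that $(a,b_0)$ meets only finitely many blocks makes the left side of~(\ref{eq7C}) a finite number $C_1$, and since $C_0$ may depend on $(a,b_0)$ one simply takes $C_0=C_1/I^n(a,b_0)$.

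The paper proceeds differently: instead of invoking Lemma~\ref{lem5} as a black box, it reruns its proof up to the modulus inequality~(\ref{eq11B}) and then substitutes the specific test function
$\eta_0(t)=\bigl(I\,t\,\widetilde q_{z_1}^{\,1/(n-1)}(t)\bigr)^{-1}$,
where $\widetilde Q=\max\{Q,1\}$ and $I=\int_{R_*}^{R_0}\frac{dt}{t\,\widetilde q_{z_1}^{\,1/(n-1)}(t)}$. The spherical-integrability hypothesis forces $0<I<\infty$ (the upper bound comes from $\widetilde q_{z_1}\geqslant 1$), and a Fubini computation yields the clean estimate $M(\Gamma)\leqslant \omega_{n-1}/I^{\,n-1}$. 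This weight is the same one used for case~2) of Theorem~\ref{th1} and avoids all the dyadic bookkeeping and the shrinking-to-bounded step, at the cost of introducing the auxiliary $\widetilde Q$; your indicator approach is more elementary but trades that simplicity for the patching construction.
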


\begin{proof}
Below we use the standard conventions $a/\infty=0$ for $a\ne\infty,$
$a/0=\infty$ for $a>0$ and $0\cdot\infty=0$ (see, e.g.,
\cite[3.I]{Sa}). Repeating the proof of Lemma~\ref{lem5}, we obtain
the relation~(\ref{eq11B}). Set $\widetilde{Q}(y)=\max\{Q(y), 1\},$
\begin{equation}\label{eq5V}
\widetilde{q}_{y_0}(r)=\frac{1}{\omega_{n-1}r^{n-1}}\int\limits_{S(y_0,
r)}\widetilde{Q}(y)\,d\mathcal{H}^{n-1}(y)
\end{equation}
and
\begin{equation}\label{eq13}
I=\int\limits_{R_*}^{R_0}\frac{dt}{t\widetilde{q}_{z_1}^{1/(n-1)}(t)}\,.
\end{equation}
Observe that $0\ne I\ne \infty.$
Now, the function
$\eta_0(t)=\frac{1}{It\widetilde{q}_{z_1}^{1/(n-1)}(t)}$ satisfies
the relation~(\ref{eqB2}) with $r_1:=R_*$ and $r_2:=R_0.$
Substituting the function $\eta_0$ into the relation~(\ref{eq11B}),
using the Fubini theorem (\cite[Theorem~8.1, Ch.~III]{Sa}) we obtain
that
\begin{equation}\label{eq14A}
M(\Gamma)\leqslant \frac{\omega_{n-1}}{I^{n-1}}<\infty\,.
\end{equation}
The relation~(\ref{eq14A}) contradicts~(\ref{eq7}), which indicates
that the assumption made in~(\ref{eq1B}) is wrong.

The equality $\overline{f}(\overline{D})=\overline{D^{\,\prime}}$ is
proved similarly to the last part of the proof of Theorem~3.1
in~\cite{SSD}.~$\Box$
\end{proof}

\medskip
{\it Proof of Theorem~\ref{th1}.} In the case  1), we choose
$\psi(t)=\frac{1}{t\log\frac{1}{t}}.$ Now, we obtain the
relations~(\ref{eq7B})--(\ref{eq7C}), see
\cite[Proposition~3.1]{Sev$_3$}.

Let us consider the case 2). Set $\widetilde{Q}(y)=\max\{Q(y), 1\}$
and let $\widetilde{q}_{y_0}(r)$ is defined in~(\ref{eq5V}). Let
$$\psi(t)\quad=\quad \left \{\begin{array}{rr}
1/[t\widetilde{q}^{\frac{1}{n-1}}_{y_0}(t)]\ , & \ t\in
(\varepsilon, \varepsilon_0)\ ,
\\ 0\ ,  &  \ t\notin (\varepsilon,
\varepsilon_0)\ ,
\end{array} \right.$$
let $I(a, b):=\int\limits_a^b\psi(t)\,dt.$ Observe that
$$I(\varepsilon, \delta(y_0))
=\int\limits_\varepsilon^{\delta(y_0)}\psi(t)\,dt\leqslant
\int\limits_\varepsilon^{\delta(y_0)}\frac{dt}{t}=\ln\frac{\delta(y_0)}{\varepsilon}<\infty$$
for any $0<\varepsilon<\delta(y_0).$ In addition, given
$0<b_0<\delta(y_0)$ there exists $0<a<b_0$ such that
$$\int\limits_{a}^{b_0}
\frac{dt}{t\widetilde{q}_{y_0}^{\frac{1}{n-1}}(t)}>0$$
due to the condition~(\ref{eq5**}). Finally, applying the Fubini
theorem (\cite[Theorem~8.1, Ch.~III]{Sa}), we obtain that
$$\int\limits_{a<|y-y_0|<b_0}
Q(y)\cdot\psi^n(|y-y_0|)\, dm(y)=\omega_{n-1}\cdot I(a,
b_0)\leqslant C\cdot I^n(a, b_0)\,,$$
where $C=\frac{I(a, b_0)}{I^n(a, b_0)}.$ Thus, the
conditions~(\ref{eq7B})--(\ref{eq7C}) hold. Now, the desired
conclusion follows by Lemma~\ref{lem5}.

\medskip
The case 3) is a conclusion of Lemma~\ref{lem6}. The case 4) is a
particular case of~3), because by the Fubiini theorem,
$$\int\limits_{B(y_0, \varepsilon_0)}Q(y)\,dm(y)=\int\limits_{0}^{\varepsilon_0}
\int\limits_{S(y_0,
t)}Q(y)\,d\mathcal{H}^{n-1}(y)\,dt=\omega_{n-1}\int\limits_{0}^{\varepsilon_0}t^{n-1}
q_{y_0}(t)\,dt<\infty\,.$$
It is worth noting that case~4) was essentially proved
in~\cite[Theorem~3.1]{SSD}, however, for the case when the mapped
domain $D^{\,\prime}$ is locally connected on its boundary. However,
the proof of the new version is not too different from the mentioned
situation.~$\Box$

\medskip
The followings examples show that that the conditions 2)--3) of
Theorem~\ref{th1} do not imply the integrability of the function $Q$
in $D.$ We also have constructed a mapping $f$ corresponding to this
function, cf. Example~1 in~\cite{SevSkv$_3$}.

\medskip
\begin{example}\label{ex1}
Consider the function $\varphi:[0, 1]\rightarrow {\Bbb R}, $ defined
by equality
$$\varphi(t)= \left\{
\begin{array}{rr}
1, & t\in \left(\frac{1}{2k+1}, \frac{1}{2k}\right),\,k=1,2,\ldots\,,\\
\frac{1}{t^2},  &  t\in \left[\frac{1}{2k},\,
\frac{1}{2k-1}\right],\,k=1,2,\ldots\,.
\end{array}
\right. $$
We arbitrarily choose a point $z_0\in
\partial {\Bbb B}^2$ and $z\in {\Bbb B}^2\cap B(z_0, 1),$ and set
\begin{equation}\label{eq10A}Q(z)=\varphi(|z-z_0|)\,,\quad Q:{\Bbb B}^2\cap B(z_0, 1)\setminus\{z_0\}
\rightarrow [0, \infty)\,,
\end{equation}
$$g_1(z)=\frac{z-z_0}{|z-z_0|}\,\rho(|z-z_0|)\,,\qquad g_1(z_0):=0\,,$$
where the function $\rho$ is defined in
\begin{equation}\label{eq3}\rho(r)=
\exp\left\{-\int\limits_{r}^1\frac{dt}{tq_{z_0}(t)}\right\}\,.
\end{equation}
Observe that, $Q(z)$ has finite integrals for almost all spheres
centered at $\partial {\Bbb B}^2$ (it follows from the locally
boundless of $Q$), where, as usual, the function $Q$ equals to zero
outside~${\Bbb B}^2$ and $Q\equiv 1$ in ${\Bbb B}^2\setminus B(z_0,
1).$ For $z\in {\Bbb B}^2\setminus B(z_0, 1)$ we set $f(z)=z.$ Let
us to prove that $g_1$ satisfies the relation,
\begin{equation}\label{eq8B}
M(g_1(\Gamma(S(z_0, r_1), S(z_0, r_2), {\Bbb B}^2)))\leqslant
\int\limits_{{\Bbb B}^2} Q(z)\cdot \eta^2 (|z-z_0|)\, dm(z)\,,
\end{equation}
where $Q(z)=q_{z_0}(|z-z_0|)=\varphi(|z-z_0|)$ (here
$q_{z_0}(|z-z_0|)$ is defined in~(\ref{eq12})). Indeed, $g_1$
belongs to the class $ACL,$ and its Jacobian and the operator norm
of the derivative are calculated by the formulae
\begin{equation*}\label{eq15}\Vert
g_1^{\,\prime}(z)\Vert=\frac{\exp\left\{-\int\limits_{|z-z_0|}^1\frac{dt}{tq_{z_0}(t)}\right\}}{|z-z_0|},\quad
|J(z, g_1)|=\frac{\exp\left\{-2\int
\limits_{|z-z_0|}^1\frac{dt}{tq_{z_0}(t)}\right\}}{|z-z_0|^2q_{z_0}(|z-z_0|)}\,,
\end{equation*}
see e.g.~\cite[Proposition~6.3]{MRSY}. Thus, $g_1\in W_{\rm loc}^{1,
2}({\Bbb B}^2\setminus 0).$ Moreover, the so-called inner dilatation
$K_I(x, g_1)$ of the mapping $g_1$ at $z$ is calculated as follows:
$K_I(z, g_1)=q_{z_0}(|z-z_0|)$ (see, e.g., \cite[Corollary~8.5  and
Theorem~8.6]{MRSY}.

Let us to show that the function $Q$ has infinite integrals over
sufficient small balls $B(z_0, \varepsilon_0).$ For this purpose, we
introduce the polar coordinates $z=(r, \varphi)$ centered at the
point $z_0,$ where $r$ denotes the Euclidean distance from $z_0$ to
$z,$ and $\varphi $ is the angle between the radius vector $z_0-z$
and tangent to the disk ${\Bbb B}^2,$ passing through the point
$z_0.$ Let
$$\theta_1=\inf\limits_{z\in {\Bbb B}^2\cap S(z_0,
\varepsilon)}\varphi,\quad\theta_2=\sup\limits_{z\in {\Bbb B}^2\cap
S(z_0, \varepsilon)}\varphi\,.$$  Using elementary methods of
geometry, we will have that $\sin \theta_1=\varepsilon/2,$ $\sin
(\pi-\theta_2)=\varepsilon/2.$ Then, for $\varepsilon\rightarrow 0,$
we obtain that $\theta_1\rightarrow 0,$ $\theta_2\rightarrow \pi.$
From here it follows that the interval of change of angles $\varphi$
is close to $\pi$ for $z\in {\Bbb B}^2\cap B(z_0, \varepsilon_0).$
In particular, for some (rather small) $\varepsilon_0>0$ we have
that $\theta_2-\theta_1\geqslant 5\pi/6.$ Observe that
$\int\limits_0^{\varepsilon_0}r\varphi(r)\,dr=\infty,$ see
e.g.~\cite[relation~(2.20)]{SevSkv$_3$}. Then, by the Fubini
theorem, we obtain that
$$\int\limits_{{\Bbb B}^2\cap B(z_0,
\varepsilon_0)}Q(z)\,dm(z)=\int\limits_0^{\varepsilon_0}
\int\limits_{S(z_0, r)\cap B(z_0, \varepsilon_0)} Q(z)\,|dz|dr=$$
$$=\int\limits_0^{\varepsilon_0}\int\limits_{\theta_1}^{\theta_2}r\varphi(r)\,dr\geqslant
\frac{5\pi}{6}\cdot\int\limits_0^{\varepsilon_0}r\varphi(r)\,dr=\infty\,.$$
Since $g_1({\Bbb B}^2)$ is simply connected, according to Riemann's
theorem on conformal mapping we may find a mapping $\varphi$ such
that $(\varphi\circ g_1)({\Bbb B}^2)={\Bbb B}^2.$ Put
$f_1:=g_1^{\,-1}\circ \varphi^{\,-1}.$ Then the mapping $f_1$
satisfies all the conditions of Theorem~\ref{th1}
(Lemma~\ref{lem5}), in particular, the inequality~(\ref{eq2*B}) with
$p=n=2.$
\end{example}

\medskip
\begin{example}\label{eqx3}
Finally, let us construct relevant examples of mappings with a
branching. For this purpose, in the notations of Example~\ref{ex1},
we put:
$f_2(z)=(f_1\circ \varphi_1)(z),$ where $\varphi_1(z)=z^2.$ Observe
that, a mapping $f_2$ is open, discrete and closed, in addition, it
satisfies the relation~(\ref{eq2*B}) for $p=n=2$ and $Q:=2K_I(z,
g)=2q_{z_0}(|z-z_0|)$ (see, e.g., \cite[Theorem~3.2]{MRV}).
\end{example}

\section{Auxiliary lemmas}

Following~\cite[Section~2.4]{NP}, we say that a domain $D\subset
{\Bbb R}^n,$ $n\geqslant 2,$ is {\it uniform with respect to
$p$-modulus}, if for any $r>0$ there is $\delta>0$ such that the
inequality
\begin{equation}\label{eq17***}
M_p(\Gamma(F^{\,*},F, D))\geqslant \delta
\end{equation}
holds for any continua $F, F^*\subset D$ with $h(F)\geqslant r$ and
$h(F^{\,*})\geqslant r,$ where $h$ is a chordal metric defined
in~(\ref{eq3C}). When $p=n,$ the prefix ''relative to $p $-modulus''
is omitted. Note that, this definition slightly different from the
''classical'' given in \cite[Chapter~2.4]{NP}, where the sets $F$
and $F^*\subset D $ are assumed to be arbitrary connected. We prove
the following statement, cf.~\cite[Lemma~4.4]{Vu}).

\medskip
\begin{lemma}\label{lem1}
{\sl\, Let $n\geqslant 2,$ $n-1<p\leqslant n,$ and let $D$ be a
domain which is uniform with respect to $p$-modulus. Let
$f:D\rightarrow D^{\,\prime}$ be an open discrete and closed mapping
of $D$ onto $D^{\,\prime}$ for which there is a Lebesgue measurable
function $Q:{\Bbb R}^n\rightarrow [0, \infty],$ equals to zero
outside of $D^{\,\prime},$ such that the
relations~(\ref{eq2*B})--(\ref{eqB2}) hold for some $y_0\in
\overline{D^{\,\prime}}.$ Assume that, there is
$\varepsilon_0=\varepsilon_0(y_0)>0$ and a Lebesgue measurable
function $\psi:(0, \varepsilon_0)\rightarrow [0,\infty]$ such that
$$I(\varepsilon,
\varepsilon_0):=\int\limits_{\varepsilon}^{\varepsilon_0}\psi(t)\,dt
< \infty\quad \forall\,\,\varepsilon\in (0, \varepsilon_0)\,,$$
\begin{equation}\label{eq7***}
I(\varepsilon, \varepsilon_0)\rightarrow
\infty\quad\text{as}\quad\varepsilon\rightarrow 0\,,
\end{equation}
and, in addition,
\begin{equation} \label{eq3.7.2}
\int\limits_{A(y_0, \varepsilon, \varepsilon_0)}
Q(y)\cdot\psi^{\,p}(|y-y_0|)\,dm(y) = o(I^p(\varepsilon,
\varepsilon_0))\end{equation}
as $\varepsilon\rightarrow 0,$ where $A(y_0, \varepsilon,
\varepsilon_0)$ is defined in~(\ref{eq1**}). Let $C_j,$
$j=1,2,\ldots ,$ be a sequence of continua such that
$h(C_j)\geqslant \delta>0$ for some $\delta>0$ and any $j\in {\Bbb
N}$ and, in addition, $h(f(C_j))\rightarrow 0$ as
$j\rightarrow\infty.$ Then $h(f(C_j), y_0)\geqslant \delta_1>0$ for
any $j\in {\Bbb N}$  and some $\delta_1>0.$} \end{lemma}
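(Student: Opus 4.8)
The plan is to argue by contradiction: assume that, after passing to a subsequence, $h(f(C_j), y_0)\to 0$. Since $h(f(C_j))\to 0$ as well, the image continua $f(C_j)$ shrink toward $y_0$; concretely, for every $R>0$ there is $j_0$ such that $f(C_j)\subset B(y_0, R)$ for all $j\geqslant j_0$ (interpreting balls in the chordal metric near $\infty$ if $y_0=\infty$, and using the inversion $\psi(y)=y/|y|^2$ to reduce to the finite case as the paper allows). Fix $R_0=\varepsilon_0$ and choose a small $R_*\in(0,\varepsilon_0)$. On the other hand, $h(C_j)\geqslant\delta$ for all $j$, so each $C_j$ is a ``large'' continuum in $D$. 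The first key step is to produce a second family of ``large'' continua that stays away from $y_0$: since $f$ is onto $D^{\,\prime}$ and $D^{\,\prime}$ has points at chordal distance bounded below from $y_0$, pick a fixed continuum $K\subset D^{\,\prime}\setminus B(y_0,R_0)$ with $h(K)\geqslant r_0>0$ for some fixed $r_0$, and let $K'\subset D$ be a continuum with $f(K')\supset K$ (or simply a fixed continuum in $D$ whose image has chordal diameter bounded below and lies outside $B(y_0,R_0)$); here one uses openness to get such a $K'$ with $h(K')$ bounded below.

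The second step is the modulus estimate from below. Because $D$ is uniform with respect to $p$-modulus and both $C_j$ and $K'$ have chordal diameter $\geqslant\min\{\delta, h(K')\}>0$, there is $\sigma>0$, independent of $j$, with
\begin{equation}\label{eq-lb}
M_p(\Gamma(C_j, K', D))\geqslant \sigma>0\qquad\text{for all }j\in{\Bbb N}.
\end{equation}
The third step is the matching upper bound. For $j$ large, $f(C_j)\subset B(y_0,R_*)$ while $f(K')\subset {\Bbb R}^n\setminus B(y_0,R_0)$, so every path in $\Gamma(C_j,K',D)$ is mapped by $f$ to a path joining $S(y_0,R_*)$ to $S(y_0,R_0)$ inside $A(y_0,R_*,R_0)$; that is, $\Gamma(C_j,K',D)>\Gamma_f(y_0,R_*,R_0)$ in the usual sense, exactly as in~(\ref{eq9A})--(\ref{eq10B}). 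Hence by~(\ref{eq2*B}),
\begin{equation}\label{eq-ub}
M_p(\Gamma(C_j,K',D))\leqslant M_p(\Gamma_f(y_0,R_*,R_0))\leqslant \int\limits_{A(y_0,R_*,R_0)} Q(y)\,\eta^{\,p}(|y-y_0|)\,dm(y)
\end{equation}
for any admissible $\eta$. Choosing $\eta(t)=\psi(t)/I(R_*,R_0)$ on $(R_*,R_0)$ and zero elsewhere (which is admissible by~(\ref{eqB2}) since $I(R_*,R_0)>0$ for $R_*$ small enough, using $I(\varepsilon,\varepsilon_0)\to\infty$), the right side of~(\ref{eq-ub}) is at most $C_0$ for a constant depending only on $R_*,R_0$ — but we need it small. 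This is where $R_*$ must be sent to $0$: the correct choice is to take, for each $j$, the radius $R_*=R_*(j)\to 0$ as $j\to\infty$ with $f(C_j)\subset B(y_0,R_*(j))$ still holding (possible since $h(f(C_j),y_0)\to 0$ and $h(f(C_j))\to 0$). Then~(\ref{eq-ub}) with $\eta=\psi/I(R_*(j),R_0)$ gives
\begin{equation}\label{eq-small}
M_p(\Gamma(C_j,K',D))\leqslant \frac{1}{I^{\,p}(R_*(j),R_0)}\int\limits_{A(y_0,R_*(j),R_0)} Q(y)\,\psi^{\,p}(|y-y_0|)\,dm(y)=o(1)
\end{equation}
as $j\to\infty$, by the hypothesis~(\ref{eq3.7.2}) together with $I(R_*(j),R_0)\to\infty$. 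Comparing~(\ref{eq-small}) with~(\ref{eq-lb}) yields $\sigma\leqslant o(1)$, a contradiction, which proves that $h(f(C_j),y_0)$ is bounded below by some $\delta_1>0$.

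The main obstacle I expect is the bookkeeping needed to make the upper bound genuinely tend to zero: one must simultaneously shrink $R_*$ along the sequence $j$ while keeping $K'$ (hence the lower bound $\sigma$) fixed, and verify that $\Gamma(C_j,K',D)>\Gamma_f(y_0,R_*(j),R_0)$ remains valid for each $j$ with its own $R_*(j)$ — this requires that $C_j$ still lies in $D$ and that its image really is trapped inside $B(y_0,R_*(j))$, which uses $h(f(C_j))\to0$ and $h(f(C_j),y_0)\to0$ in an essential, quantitative way (choosing $R_*(j)$, say, as $\max\{h(f(C_j)), h(f(C_j),y_0)\}$ suitably rescaled). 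A secondary technical point is the reduction handling $y_0=\infty$ via the inversion and the verification that $K'$ with the required properties exists — here openness of $f$ and surjectivity onto $D^{\,\prime}$, plus the fact that $D^{\,\prime}$ is a domain (so not contained in any small ball around $y_0$), do the job. Everything else is a direct transcription of the modulus comparison already carried out in the proof of Lemma~\ref{lem5}.
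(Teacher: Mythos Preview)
Your proposal is correct and follows essentially the same approach as the paper: argue by contradiction, fix a reference continuum in $D$ whose image avoids a neighborhood of $y_0$, use uniformity with respect to $p$-modulus to bound $M_p(\Gamma(C_j,\cdot,D))$ from below, and then shrink the inner radius along the sequence and apply~(\ref{eq2*B}) with $\eta=\psi/I$ together with~(\ref{eq3.7.2}) to force the modulus to $0$. The paper streamlines two points you overcomplicate: instead of building $K^{\prime}$ via a continuum $K\subset D^{\prime}\setminus B(y_0,R_0)$ and lifting, it simply takes \emph{any} nondegenerate continuum $F\subset D$ with $y_0\notin f(F)$ (which exists since $f^{-1}(y_0)$ is discrete in $D$) and then chooses the outer radius $1/l_0<\varepsilon_0$ afterwards so that $f(F)\cap B(y_0,1/l_0)=\varnothing$---this avoids your issue that fixing $R_0=\varepsilon_0$ may leave no room for $K$ in $D^{\prime}\setminus B(y_0,\varepsilon_0)$; and it uses the discrete scale $1/l$ in place of your $R_*(j)$, which is only a cosmetic difference.
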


\begin{proof}
We may assume that $y_0\ne\infty.$ Suppose the opposite, namely, let
$h(f(C_{j_k}), y_0)\rightarrow 0$ as $k\rightarrow\infty$ for some
increasing sequence of numbers $j_k,$ $k=1,2,\ldots .$ Let $F\subset
D$ be any continuum in $D$ such that $y_0\not\in f(F).$ Let
$\Gamma_k:=\Gamma(F, C_{j_k}, D).$ Then, due to the definition of
the uniformity of the domain with respect to $p$-modulus, we obtain
that
\begin{equation}\label{eq3A}
M_p(\Gamma_k)\geqslant \delta_2>0
\end{equation}
for any $k\in {\Bbb N}$ and some $\delta_2>0.$ On the other hand,
let us to consider the family of paths $f(\Gamma_k).$

\medskip
Let us to prove that, for any $l\in {\Bbb N}$ there is a number
$k=k_l$ such that
\begin{equation}\label{eq3B}
f(C_{j_k})\subset B(y_0, 1/l)\,,\qquad k\geqslant k_l\,.
\end{equation}
Suppose the opposite. Then there is $l_0\in {\Bbb N}$ such that
\begin{equation}\label{eq3F}
f(C_{j_{m_l}})\cap ({\Bbb R}^n\setminus B(y_0, 1/l_0))\ne\varnothing
\end{equation}
for some increasing sequence of numbers $m_l,$ $l=1,2,\ldots .$ In
this case, there is a sequence $x_{m_l}\in f(C_{j_{m_l}})\cap ({\Bbb
R}^n\setminus B(y_0, 1/l_0),$ $l\in {\Bbb N}.$ Since by the
assumption $h(f(C_{j_k}), y_0)\rightarrow 0$ for some sequence of
numbers $j_k,$ $k=1,2,\ldots ,$ we obtain that
\begin{equation}\label{eq3E}
h(f(C_{j_{m_l}}), y_0)\rightarrow 0\qquad \text{as}\qquad
l\rightarrow\infty\,.
\end{equation}
Since $h(f(C_{j_{m_l}}), y_0)=\inf\limits_{y\in f(C_{j_{m_l}})}h(y,
y_0)$ and $f(C_{j_{m_l}})$ is a compact as a continuous image of the
compact set $C_{j_{m_l}}$ under the mapping $f,$ it follows that
$h(f(C_{j_{m_l}}), y_0)=h(y_l, y_0),$ where $y_l\in f(C_{j_{m_l}}).$
Due to the relation~(\ref{eq3E}) we obtain that $y_l\rightarrow y_0$
as $l\rightarrow\infty.$ Since by the assumption
$h(f(C_j))=\sup\limits_{y,z\in f(C_j)}h(y,z)\rightarrow 0$ as
$j\rightarrow\infty,$ we have that $h(y_l, x_{m_l})\leqslant
h(f(C_{j_{m_l}}))\rightarrow 0$ as $l\rightarrow\infty.$  Now, by
the triangle inequality, we obtain that
$$h(x_{m_l}, y_0)\leqslant h(x_{m_l}, y_l)+h(y_l, y_0)
\rightarrow 0\qquad \text {as}\quad l\rightarrow\infty\,.$$
The latter contradicts with~(\ref{eq3F}). The contradiction obtained
above proves~(\ref{eq3B}).

\medskip
The following considerations are similar to the second part of the
proof of Lemma~2.1 in~\cite{Sev$_1$}. Without loss of generality we
may consider that the number $l_0\in {\Bbb N}$ is such that
$1/l<\varepsilon_0$ for any $l\geqslant l_0,$ and
\begin{equation}\label{eq3I} f(F)\subset {\Bbb R}^n\setminus B(y_0, 1/l_0)\,.
\end{equation}
In this case, we observe that
\begin{equation}\label{eq3G}
f(\Gamma_{k_l})>\Gamma(S(y_0, 1/l), S(y_0, 1/l_0), A(y_0,1/l,
1/l_0))\,.
\end{equation}
Indeed, let $\widetilde{\gamma}\in f(\Gamma_{k_l}).$ Then
$\widetilde{\gamma}(t)=f(\gamma(t)),$ where $\gamma\in
\Gamma_{k_l},$ $\gamma:[0, 1]\rightarrow D,$ $\gamma(0)\in F,$
$\gamma(1)\in C_{j_{k_l}}.$ Due to the relation~(\ref{eq3I}), we
obtain that $f(\gamma(0))\in f(F)\subset {\Bbb R}^n\setminus B(y_0,
1/l_0).$ In addition, by~(\ref{eq3B}) we have that $\gamma(1)\in
C_{j_{k_l}}\subset B(y_0, 1/l_0).$ Thus, $|f(\gamma(t))|\cap B(y_0,
1/l_0)\ne\varnothing \ne |f(\gamma(t))|\cap ({\Bbb R}^n\setminus
B(y_0, 1/l_0)).$ Now, by~\cite[Theorem~1.I.5.46]{Ku} we obtain that,
there is $0<t_1<1$ such that $f(\gamma(t_1))\in S(y_0, 1/l_0).$ Set
$\gamma_1:=\gamma|_{[t_1, 1]}.$ We may consider that
$f(\gamma(t))\in B(y_0, 1/l_0)$ for any $t\geqslant t_1.$ Arguing
similarly, we obtain $t_2\in [t_1, 1]$ such that $f(\gamma(t_2))\in
S(y_0, 1/l).$ Put $\gamma_2:=\gamma|_{[t_1, t_2]}.$ We may consider
that $f(\gamma(t))\not\in B(y_0, 1/l)$ for any $t\in [t_1, t_2].$
Now, a path $f(\gamma_2)$ is a subpath of
$f(\gamma)=\widetilde{\gamma},$ which belongs to $\Gamma(S(y_0,
1/l), S(y_0, 1/l_0), A(y_0,1/l, 1/l_0)).$ The relation~(\ref{eq3G})
is established.

\medskip
It follows from~(\ref{eq3G}) that
\begin{equation}\label{eq3H}
\Gamma_{k_l}>\Gamma_{f}(S(y_0, 1/l), S(y_0, 1/l_0), A(y_0,1/l,
1/l_0))\,.
\end{equation}
Since $I(\varepsilon, \varepsilon_0)\rightarrow\infty$ as
$\varepsilon\rightarrow 0,$ we may consider that $I(1/l, 1/l_0)>0$
for sufficiently large $l\in {\Bbb N}.$ Set
$$\eta_{l}(t)=\left\{
\begin{array}{rr}
\psi(t)/I(1/l, 1/l_0), & t\in (1/l, 1/l_0)\,,\\
0,  &  t\not\in (1/l, 1/l_0)\,,
\end{array}
\right. $$
where $I(1/l, 1/l_0)=\int\limits_{1/l}^{1/l_0}\,\psi (t)\, dt.$
Observe that
$\int\limits_{1/l}^{\varepsilon_0}\eta_{l}(t)\,dt=1.$ Now, by the
relations~(\ref{eq3.7.2}) and~(\ref{eq3H}), and due to the
definition of $f$ in~(\ref{eq2*B}), we obtain that
$$M_p(\Gamma_{k_l})\leqslant M_p(\Gamma_{f}(S(y_0, 1/l), S(y_0,
1/l_0), A(y_0,1/l,1/l_0)))\leqslant$$
\begin{equation}\label{eq3J}
\leqslant \frac{1}{I^p(1/l, 1/l_0)}\int\limits_{A(y_0, 1/l,
\varepsilon_0)} Q(y)\cdot\psi^{\,p}(|y-y_0|)\,dm(y)\rightarrow
0\quad \text{as}\quad l\rightarrow\infty\,.
\end{equation}
The latter contradicts with~(\ref{eq3A}). The contradiction obtained
above proves the lemma.~$\Box$
\end{proof}

\medskip
The following lemma generalizes Corollary~4.5 in~\cite{Vu}.

\medskip
\begin{lemma}\label{lem2}
{\sl\,Let $n\geqslant 2,$ $n-1<p\leqslant n$ and  let $D$ be a
domain which is unform with a respect to $p$-modulus. Let
$f:D\rightarrow D^{\,\prime}$ be an open discrete and closed mapping
of $D$ onto $D^{\,\prime}$ for which there is a Lebesgue measurable
function $Q:{\Bbb R}^n\rightarrow [0, \infty]$ equals to zero
outside $D^{\,\prime}$ such that the
conditions~(\ref{eq2*B})--(\ref{eqB2}) hold for any point $y_0\in
\partial{D^{\,\prime}}.$ Assume that, there is
$\varepsilon_0=\varepsilon_0(y_0)>0$ and a Lebesgue measurable
function $\psi:(0, \varepsilon_0)\rightarrow [0,\infty]$ such that
the relations~(\ref{eq7***})--(\ref{eq3.7.2}) hold, where $A(y_0,
\varepsilon, \varepsilon_0)$ is defined in~(\ref{eq1**}). Assume
also that, a domain $D$ is locally connected on its boundary, and
that $f$ has a continuous extension
$\overline{f}:\overline{D}\rightarrow \overline{D^{\,\prime}}.$ Then
$\overline{f}$ is light.}
\end{lemma}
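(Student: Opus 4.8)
\textbf{Proof plan for Lemma~\ref{lem2}.} The plan is to argue by contradiction: suppose $\overline{f}$ is not light, so there is a point $y_*\in\overline{D^{\,\prime}}$ and a nondegenerate continuum $K\subset\overline{f}^{\,-1}(y_*)\subset\overline{D}.$ Since $f$ is discrete inside $D$ and $f(D)=D^{\,\prime},$ a nondegenerate continuum in a single fiber cannot lie in $D,$ so $K$ must meet $\partial D,$ and in fact $y_*\in\partial D^{\,\prime}$ (because $\overline{f}$ is boundary-preserving, $\overline{f}(\partial D)=\partial D^{\,\prime},$ and points of $D^{\,\prime}$ have discrete preimage). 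Pick two distinct points $x_1,x_2\in K$; at least one of them, say $x_1,$ lies in $\partial D.$ Using that $D$ is locally connected at $\partial D$ and $K$ is a nondegenerate continuum accumulating to $\partial D,$ I would produce a sequence of continua $C_j\subset D$ with $h(C_j)\geqslant\delta>0$ for a fixed $\delta,$ such that $C_j$ converges (in the Hausdorff sense) to (a subcontinuum of) $K$; concretely, take small connected neighborhoods $V_j\cap D$ of points of $K\cap\partial D$ shrinking to $K,$ and inside them choose polygonal continua joining points close to two distinct points of $K.$ By continuity of $\overline{f}$ and $\overline{f}(K)=\{y_*\},$ we get $h(f(C_j))\to 0$ and $f(C_j)\to y_*=y_0$ as $j\to\infty.$

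With this sequence $\{C_j\}$ in hand, the contradiction comes directly from Lemma~\ref{lem1}: its hypotheses are exactly the standing assumptions of Lemma~\ref{lem2} (the domain $D$ is uniform with respect to $p$-modulus, $f$ satisfies the inverse Poletsky inequality at $y_0\in\partial D^{\,\prime},$ and the function $\psi$ obeys \eqref{eq7***}--\eqref{eq3.7.2} at $y_0$), together with the $C_j$ just constructed. Lemma~\ref{lem1} then yields $h(f(C_j),y_0)\geqslant\delta_1>0$ for all $j,$ which contradicts $f(C_j)\to y_0$ (equivalently $h(f(C_j),y_0)\to 0$). This contradiction shows $\overline{f}^{\,-1}(y_*)$ contains no nondegenerate continuum, i.e. $\overline{f}$ is light.

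I expect the main obstacle to be the construction of the continua $C_j\subset D$ with a uniform lower bound $h(C_j)\geqslant\delta$ that nonetheless satisfy $h(f(C_j))\to 0.$ The point is that $K$ is nondegenerate, so it contains two points $x_1\ne x_2$ with $h(x_1,x_2)=:3\delta>0$; one wants $C_j$ to ``shadow'' $K$ closely enough near $x_1$ and $x_2$ that $C_j$ still has chordal diameter at least $\delta,$ while being contained in $D$ and in a tiny neighborhood of $K$ so that $\overline{f}(C_j)$ is close to $\overline{f}(K)=\{y_*\}.$ Here local connectedness of $D$ at $\partial D$ is essential: it lets one pick, for each neighborhood $U$ of a boundary point of $K,$ a smaller neighborhood $V\subset U$ with $V\cap D$ connected, so that points of $K\cap V$ lying in $D$ (or arbitrarily close approximants in $D$) can be joined by a continuum inside $V\cap D\subset U\cap D.$ Stitching finitely many such local pieces along $K$ (which is connected and compact) produces the desired $C_j,$ and shrinking $U$ along a sequence gives $h(f(C_j))\to 0.$ Once this is set up carefully, the rest is an immediate appeal to Lemma~\ref{lem1}. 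One should also double-check the degenerate-looking case where $x_1$ or $x_2$ happens to lie in $D$ rather than $\partial D$: since $K$ is connected and meets $\partial D,$ one may simply replace $K$ by a subcontinuum that still has two points at chordal distance $\geqslant\delta$ and touches $\partial D,$ or even work with points of $K$ in $D$ directly, using ordinary local connectedness of the domain $D$ at interior points; either way the construction goes through.
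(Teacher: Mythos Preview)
Your proposal is correct and follows essentially the same route as the paper: argue by contradiction, note that a nondegenerate continuum $K$ in a fiber must lie in $\partial D$ with image $y_0\in\partial D^{\,\prime}$, build continua $C_j\subset D$ of chordal diameter bounded below that shadow $K$ ever more closely so that $h(f(C_j))\to 0$ and $h(f(C_j),y_0)\to 0$, and then invoke Lemma~\ref{lem1} for the contradiction.

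The one place where the paper is cleaner than your plan is precisely the step you flagged as the main obstacle. Rather than stitching finitely many locally connected neighborhoods along $K$, the paper first uses uniform continuity of $\overline{f}$ on the compactum $\overline{D}$ to choose $\delta_j\to 0$ with $h(\overline{f}(x),y_0)<1/j$ whenever $h(x,K)<\delta_j$, and then appeals to a single result (Herron--Koskela, \cite[Lemma~2.2]{HK}) which, given local connectedness of $D$ at $\partial D$, produces for each $j$ a neighborhood $U_j$ of the whole continuum $K$ inside the $\delta_j$-tube such that $U_j\cap D$ is connected (hence path-connected). One then simply joins approximants $z_j\to z_0$, $w_j\to w_0$ (where $h(K)=h(z_0,w_0)$) by a path in $U_j\cap D$ to get $C_j$. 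Your chaining argument also works, but the Herron--Koskela lemma packages it neatly and avoids having to verify that overlaps of the local pieces meet $D$.
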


\begin{proof}
Assume the contrary, namely, let $y_0\in
\partial D^{\,\prime}$ be some point such that $f^{\,-1}(y_0)\supset K_0,$
where $K_0\subset\partial D$ is some nondegenerate continuum. Then,
in particular, $f(K_0)=y_0.$ Since $\overline{D}$ is a compactum in
$\overline{{\Bbb R}^n}$ and, in addition, $\overline{f}$ is
continuous in $\overline{D},$ the mapping $\overline{f}$ is
uniformly continuous in $\overline{D}.$ In this case,  for any $j\in
{\Bbb N}$ there is $\delta_j<1/j$ such that
$$h(\overline{f}(x),\overline{f}(x_0))$$
\begin{equation}\label{eq3K}
=h(\overline{f}(x),y_0)<1/j \quad \forall\,\, x,x_0\in
\overline{D},\quad h(x, x_0)<\delta_j\,, \quad \delta_j<1/j\,.
\end{equation}
Denote by $B_h(x_0, r)=\{x\in \overline{{\Bbb R}^n}: h(x, x_0)<r\}.$
Then, given $j\in {\Bbb N},$ we set
$$B_j:=\bigcup\limits_{x_0\in K_0}B_h(x_0, \delta_j)\,,\quad j\in {\Bbb N}\,.$$
Since the set $B_j$ is a neighborhood of~$K_0,$
by~\cite[Lemma~2.2]{HK} there is a neighborhood $U_j$ of the set
$K_0$ such that $U_j\subset B_j$ and the set $U_j\cap D$ is
connected. Without loss of generality, we may assume that $U_j$ are
open. Then the set $U_j\cap D$ is path connected, as well
(see~\cite[Proposition~13.1]{MRSY}). Since $K_0$ is a compact set,
there are $z_0, w_0\in K_0$ such that $h(K_0)=h(z_0, w_0).$ It
follows from this, that there are $z_j\in U_j\cap D$ and $w_j\in
U_j\cap D$ such that $z_j\rightarrow z_0$ and $w_j\rightarrow w_0$
as $j\rightarrow\infty.$ We may assume that
\begin{equation}\label{eq2B}
h(z_j, w_j)>h(K_0)/2\quad \forall\,\, j\in {\Bbb N}\,.
\end{equation}
Since the set $U_j\cap D$ is path connected, we may join points
$z_j$ and $w_j$ by some path $\gamma_j$ in $U_j\cap D.$ Set
$C_j:=|\gamma_j|.$

\medskip
Observe that, $h(f(C_j))\rightarrow 0$ as $j\rightarrow\infty.$
Indeed, since $f(C_j)$ is a continuum in $\overline{{\Bbb R}^n},$
there are points $y_j, y^{\,\prime}_j\in f(C_j)$ such that
$h(f(C_j))=h(y_j, y^{\,\prime}_j).$ Then there are $x_j,
x^{\,\prime}_j\in C_j$ such that $y_j=f(x_j)$ and
$y^{\,\prime}_j=f(x^{\,\prime}_j).$ Then points $x_j$ and
$x^{\,\prime}_j$ belong to $U_j\subset B_j.$ Therefore, there are
$x^j_1$ and $x^j_2\in K_0$ such that $x_j\in B(x^j_1, \delta_j)$ and
$x^{\,\prime}_j\in B(x^j_2, \delta_j).$ In this case, by the
relation~(\ref{eq3K}) and due to the triangle inequality we obtain
that
$$h(f(C_j))=h(y_j, y^{\,\prime}_j)=h(f(x_j), f(x^{\,\prime}_j))\leqslant h(f(x_j), f(x^j_1))$$
\begin{equation}\label{eq3L}
+h(f(x^j_1), f(x^j_2))+h(f(x^j_2), f(x^{\,\prime}_j))<2/j\rightarrow
0\quad {\text as}\quad j\rightarrow\infty\,.\end{equation}
It follows from~(\ref{eq2B}) and~(\ref{eq3L}) that, the continua
$C_j,$ $j=1,2,\ldots ,$ satisfy the conditions of Lemma~\ref{lem1}.
By this lemma we may obtain that $h(f(C_j), y_0)\geqslant
\delta_1>0$ for any $j\in {\Bbb N}.$ On the other hand, by the
proving above $x_j\in B(x^j_1, \delta_j).$ Now, by the
relation~(\ref{eq3K}) we obtain that $h(f(x_j),y_0)<1/j,$
$j=1,2,\ldots .$ The resulting contradiction indicates the
incorrectness of the assumption that $\overline{f}$ is not light in
$\partial D.$ Lemma is proved.~$\Box$
\end{proof}

\medskip
\begin{corollary}
{\sl\, The statements of Lemmas~\ref{lem1} and~\ref{lem2} are
fulfilled if we put $D={\Bbb B}^n.$ }
\end{corollary}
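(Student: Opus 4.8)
The plan is to reduce the corollary to checking that $D={\Bbb B}^n$ satisfies the two standing hypotheses used in Lemmas~\ref{lem1} and~\ref{lem2}: that ${\Bbb B}^n$ is uniform with respect to $p$-modulus for every $p\in(n-1,n]$, and that ${\Bbb B}^n$ is locally connected at each of its boundary points. The second requirement is immediate, since ${\Bbb B}^n$ is convex: for any $x_0\in\partial{\Bbb B}^n$ and any ball $V=B(x_0,\rho)$ the intersection $V\cap{\Bbb B}^n$ is convex, hence connected, so ${\Bbb B}^n$ is (even linearly) locally connected at $\partial{\Bbb B}^n$. Thus the whole content of the proof lies in verifying the uniformity inequality~(\ref{eq17***}) for ${\Bbb B}^n$.

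To do that, I would first pass from the chordal to the Euclidean metric. Since ${\Bbb B}^n$ is a bounded subset of ${\Bbb R}^n$ that stays away from $\infty$, the metrics $h$ in~(\ref{eq3C}) and $|\cdot|$ are bi-Lipschitz equivalent on ${\Bbb B}^n$; hence there is a constant $c_0=c_0(n)>0$ such that any continua $F,F^{\,*}\subset{\Bbb B}^n$ with $h(F)\geqslant r$ and $h(F^{\,*})\geqslant r$ automatically have Euclidean diameters at least $c_0 r$. The problem then becomes: given two continua in ${\Bbb B}^n$ of Euclidean diameter $\geqslant d$, bound $M_p(\Gamma(F^{\,*},F,{\Bbb B}^n))$ from below by a positive constant depending only on $n$, $p$ and $d$. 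I would obtain this from the standard lower estimates for the $p$-modulus (equivalently, the $p$-capacity) of a family of paths joining two nondegenerate continua inside a fixed ball: for $p=n$ this is the classical Loewner-type bound already implicit in \cite[Lemma~4.4]{Vu} and \cite[Section~2.4]{NP}, while for $n-1<p<n$ it follows from the fact that a nondegenerate continuum of diameter $d$ inside a ball has strictly positive $p$-capacity, quantitatively controlled from below in terms of $n$, $p$ and $d$, precisely because $p$ lies above the critical exponent $n-1$.

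The step I expect to be the main obstacle is exactly this last $p$-modulus bound in the subconformal range $n-1<p<n$: unlike the conformal case $p=n$, where the Loewner property of Euclidean space is textbook material, here one must invoke (or reprove) the positivity and a quantitative lower bound for the $p$-capacity of a continuum of prescribed diameter in a ball, and keep careful track that the resulting constant depends only on $n$, $p$ and $d$ (hence, after the reduction above, only on $n$, $p$ and $r$). Once that modulus estimate is secured, the conclusion is immediate: ${\Bbb B}^n$ satisfies all the hypotheses of Lemmas~\ref{lem1} and~\ref{lem2} concerning the domain $D$, so both lemmas hold verbatim with $D={\Bbb B}^n$.
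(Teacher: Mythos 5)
Your overall strategy coincides with the paper's: verify that ${\Bbb B}^n$ is locally connected at its boundary (immediate, by convexity), pass from the chordal to the Euclidean metric on the bounded set ${\Bbb B}^n$, and reduce the uniformity of ${\Bbb B}^n$ with respect to the $p$-modulus to a Loewner-type lower bound for $M_p(\Gamma(E,F,{\Bbb B}^n))$ in terms of the diameters of the continua $E$ and $F$. The gap sits exactly at the point you yourself flag as the main obstacle. For $n-1<p<n$ you justify the required bound by appealing to the fact that a nondegenerate continuum of diameter $d$ in a ball has strictly positive $p$-capacity because $p>n-1$. That fact is true, but it is not the estimate you need: positivity (even a quantitative lower bound) of the $p$-capacity of each of $E$ and $F$ separately controls the modulus of the curves joining that one continuum to the boundary of a surrounding ball, whereas the definition of uniformity in~(\ref{eq17***}) requires a lower bound on the modulus of the family of curves joining the two continua \emph{to each other} inside ${\Bbb B}^n$. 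Passing from two one-continuum capacity bounds to the two-continua joining family is precisely the content of a Loewner-type theorem and is not a formal consequence of capacity positivity; as stated, your argument does not supply it.

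The paper closes this step by quoting a concrete result: since ${\Bbb B}^n$ is Ahlfors regular with respect to the Euclidean metric and Lebesgue measure and supports a $(1,p)$-Poincar\'e inequality for every $p\geqslant 1$, Proposition~4.7 of~\cite{AS} gives $M_p(\Gamma(E,F,{\Bbb B}^n))\geqslant \frac{1}{C}\min\{{\rm diam}\,E,\,{\rm diam}\,F\}$ for all $n-1<p\leqslant n$ and all continua $E,F\subset{\Bbb B}^n$, which is exactly the inequality~(\ref{eq1H}) from which uniformity follows after the chordal--Euclidean comparison. To repair your proof you should either cite such a non-conformal Loewner estimate explicitly (verifying its hypotheses, as the paper does) or reprove it; deriving it from the positivity of the $p$-capacity of a single continuum will not work without substantial additional argument.
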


\begin{proof}
Obviously, the domain $D={\Bbb B}^n$ is locally connected at its
boundary. We prove that this domain is uniform with respect to
$p$-modulus for $p\in (n-1, n].$ Indeed, since ${\Bbb B}^n$ is a
Loewner space (see~\cite[Example~8.24(a)]{He}), the set ${\Bbb B}^n$
is Ahlfors regular with respect to the Euclidean metric $d$ and
Lebesgue measure in ${\Bbb R}^n$  (see~\cite[Proposition~8.19]{He}).
In addition, in ${\Bbb B}^n,$ $(1; p)$-Poincar\'{e} inequality holds
for any $p\geqslant 1$ (see e.g.~\cite[Theorem~10.5]{HaK}). Now,
by~\cite[Proposition~4.7]{AS} we obtain that the relation
\begin{equation}\label{eq1H}
M_p(\Gamma(E, F, {\Bbb B}^n))\geqslant \frac{1}{C}\min\{{\rm
diam}\,E, {\rm diam}\,F\}\,,
\end{equation}
holds for any $n-1<p\leqslant n$ and for any continua $E, F\subset
{\Bbb B}^n,$ where $C>0$ is some constant, and ${\rm diam}$ denotes
the Euclidean diameter. Since the Euclidean distance is equivalent
to the chordal distance on bounded sets, the uniformity of the
domain $D={\Bbb B}^n$ with respect to the $p$-modulus follows
directly from~(\ref{eq1H}).~$\Box$
\end{proof}

\section{Uniform domains and strongly accessible boundaries}

We need the following statement (see~\cite[Theorem~4.2]{Na}).

\medskip
\begin{proposition}\label{pr1}
{\sl\, Let $\frak{F}$ be a family of connected sets in $D$ such that
$\inf\limits_{F\subset \frak{F}}h(F)>0,$ and let
$\inf\limits_{F\subset \frak{F}}M(\Gamma(F, A, D))>0$ for some
continuum $A\subset D.$ Then
$$\inf\limits_{F, F^{\,*}\subset
\frak{F}}M(\Gamma(F, F^{\,*}, D))>0\,.$$ }
\end{proposition}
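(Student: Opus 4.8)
\emph{Proposal.} Write $\delta_0:=\inf_{F\in\mathfrak F}M(\Gamma(F,A,D))>0$. First I would observe that $\delta_0>0$ already forces $A$ — and every $F\in\mathfrak F$ — to be a \emph{nondegenerate} continuum, since the family of paths joining a one-point set to an arbitrary set has $n$-modulus zero; thus the hypothesis $\inf_{F\in\mathfrak F}h(F)>0$ is not really needed for the argument below. The plan is to reduce the whole statement to a single ``triangle inequality'' for the $n$-modulus, with the fixed continuum $A$ serving as the intermediate vertex.

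The key fact I would use is the series (subadditivity) inequality
$$M\bigl(\Gamma(F,F^{\,*},D)\bigr)^{\frac{1}{1-n}}\ \leqslant\ M\bigl(\Gamma(F,A,D)\bigr)^{\frac{1}{1-n}}+M\bigl(\Gamma(A,F^{\,*},D)\bigr)^{\frac{1}{1-n}}\qquad\text{for all }F,F^{\,*}\in\mathfrak F,$$
i.e. the statement that $\lambda_D(E,F):=M(\Gamma(E,F,D))^{\frac{1}{1-n}}$ is a pseudometric on the nondegenerate continua of $D$. This is the series rule for the conformal capacity together with the classical identification of $M(\Gamma(E,F,D))$ with the capacity of the condenser $(E,F;D)$. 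To prove it directly I would take near-extremal admissible metrics $\rho_1,\rho_2$ for $\Gamma(F,A,D)$ and $\Gamma(A,F^{\,*},D)$, form the $\rho_1$-length potential $u$ of $F$ (the $\rho_1$-length distance to $F$, truncated at $1$) and the $\rho_2$-length potential $v$ of $A$, and glue $tu$ on the ``$F$-side'' of $A$ with $t+(1-t)v$ on the rest, optimizing the split $t\in(0,1)$; the gradient of the glued function is, after normalization, admissible for $\Gamma(F,F^{\,*},D)$, and estimating its $L^n$ norm produces the right-hand side. This gluing is the step I expect to carry all the real work and to be the main obstacle.

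Granting the inequality, the conclusion is immediate: since $t\mapsto t^{\frac{1}{1-n}}$ is a decreasing bijection of $(0,\infty)$ onto itself ($n\geqslant2$), the bounds $M(\Gamma(F,A,D))\geqslant\delta_0$ and $M(\Gamma(A,F^{\,*},D))\geqslant\delta_0$ give $M(\Gamma(F,A,D))^{\frac{1}{1-n}}\leqslant\delta_0^{\frac{1}{1-n}}$ and likewise for $\Gamma(A,F^{\,*},D)$, whence $M(\Gamma(F,F^{\,*},D))^{\frac{1}{1-n}}\leqslant 2\,\delta_0^{\frac{1}{1-n}}$. Raising to the negative power $1-n$ reverses the inequality and yields
$$M\bigl(\Gamma(F,F^{\,*},D)\bigr)\ \geqslant\ \bigl(2\,\delta_0^{\frac{1}{1-n}}\bigr)^{1-n}\ =\ 2^{\,1-n}\,\delta_0\ >\ 0,$$
a bound independent of $F$ and $F^{\,*}$; taking the infimum over $F,F^{\,*}\in\mathfrak F$ finishes the proof.

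As an alternative route avoiding the capacity machinery I would argue by contradiction: choose $F_k,F_k^{\,*}\in\mathfrak F$ with $M(\Gamma(F_k,F_k^{\,*},D))\to0$ and admissible metrics $\rho_k$ with $\|\rho_k\|_{L^n(D)}^n\to0$, let $u_k$ be the truncated $\rho_k$-length distance to $F_k$ (so $u_k\equiv0$ on $F_k$, $u_k\equiv1$ on $F_k^{\,*}$, $u_k$ continuous on $D$, and $\int_D|\nabla u_k|^n\to0$), and use the two modulus lower bounds to force $\inf_A u_k\to0$ and $\sup_A u_k\to1$. Then $u_k$ is continuous on the connected nondegenerate continuum $A$ and nearly attains both $0$ and $1$ there; the remaining task — incompatible with $\int_D|\nabla u_k|^n\to0$ — is to localize the modulus estimate to the sub- and superlevel sets of $u_k$ inside $A$, which is again the heart of the matter and is essentially equivalent to the series inequality above.
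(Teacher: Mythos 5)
Your central ``key fact'' --- that $(E,F)\mapsto M(\Gamma(E,F,D))^{1/(1-n)}$ obeys a triangle inequality with an arbitrary fixed continuum $A$ as the intermediate vertex --- is false, and the counterexample is precisely the configuration you dismiss in your opening remark. Take $D=B(0,100)$, let $A=[x_1,x_2]$ be a segment with $|x_1-x_2|=10$, and let $F_{\varepsilon}$, $F^{\,*}_{\varepsilon}$ be closed balls of radius $\varepsilon$ tangent to $A$ at $x_1$ and $x_2$ respectively. By the Loewner estimate, $M(\Gamma(F_{\varepsilon},A,D))$ and $M(\Gamma(A,F^{\,*}_{\varepsilon},D))$ are bounded below by a constant $c_n>0$ uniformly in $\varepsilon$ (two continua at distance zero, the smaller of diameter $2\varepsilon$), so the right-hand side of your inequality stays bounded as $\varepsilon\rightarrow 0$; but every path joining $F_{\varepsilon}$ to $F^{\,*}_{\varepsilon}$ crosses the ring $A(x_1,3\varepsilon,5)$, whence $M(\Gamma(F_{\varepsilon},F^{\,*}_{\varepsilon},D))\leqslant\omega_{n-1}\bigl(\log\frac{5}{3\varepsilon}\bigr)^{1-n}\rightarrow 0$ and the left-hand side blows up. The same family $\mathfrak{F}=\{F_{\varepsilon},F^{\,*}_{\varepsilon}\}_{\varepsilon\in(0,1)}$ violates the conclusion of the Proposition itself, which shows that the hypothesis $\inf_{F\in\mathfrak{F}}h(F)>0$ is \emph{essential}, contrary to your claim that it ``is not really needed'': the positivity of $\delta_0$ does force each $F$ to be nondegenerate, but nondegeneracy is far weaker than a uniform lower bound on diameters, and it is the latter that the proof must exploit. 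The pseudometric you appear to be recalling is Ferrand's $\lambda_D$, whose triangle inequality involves infima over continua reaching $\partial D$; for two fixed continua inside $D$ no such inequality holds, and your gluing construction cannot repair this because the obstruction is not technical but structural (a huge $A$ can be modulus-close to two sets that are modulus-far from each other).

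Your alternative route is sounder in spirit --- the deductions $\inf_A u_k\rightarrow 0$ and $\sup_A u_k\rightarrow 1$ from the two hypotheses are correct --- but it stops exactly where the work begins, as you acknowledge. To reach a contradiction with $\int_D\rho_k^n\,dm\rightarrow 0$ one must extract from $\{u_k\leqslant 1/3\}$ and $\{u_k\geqslant 2/3\}$ two continua of \emph{uniformly} positive diameter lying in a fixed compact part of $D$ before any Loewner-type estimate applies; this is where $\inf_{F\in\mathfrak{F}}h(F)>0$ (together with the fixed size of $A$ and a localization argument) must enter, and it is not ``essentially equivalent'' to your series inequality --- it is the substitute for it. Note finally that the paper itself offers no proof of this statement: it is quoted from N\"akki [Na, Theorem~4.2]. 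As written, your proposal does not establish the Proposition.
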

Let $p\geqslant 1.$ Due to~\cite[Section~3]{MRSY} we say that a
boundary $D$ is called {\it strongly accessible with respect to
$p$-modulus at $x_0\in \partial D,$} if for any neighborhood $U$ of
the point $x_0\in\partial D$ there is a neighborhood $V\subset U$ of
this point, a compactum $F\subset D$ and a number $\delta>0$ such
that $M_p(\Gamma(E, F, D))\geqslant \delta$ for any continua
$E\subset D$ such that $E\cap \partial U\ne\varnothing\ne E\cap
\partial V.$ The boundary of a domain $D$ is called {\it strongly accessible with respect to
$p$-modulus,} if this is true for any $x_0\in
\partial D.$ When $p=n,$ prefix ''relative to $p$-modulus'' is omitted.
The following lemma is valid (see the statement similar in content
to~\cite[Theorem~6.2]{Na}).

\medskip
\begin{lemma}\label{lem4}
{\sl\, A domain $D\subset {\Bbb R}^n$ has a strongly accessible
boundary if and only if $D$ is uniform.}
\end{lemma}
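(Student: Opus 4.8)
The plan is to prove the two implications separately; the only substantial external inputs are Proposition~\ref{pr1} and the compactness of $\overline{D}$ in $\overline{{\Bbb R}^n}.$ \emph{Uniformity implies strong accessibility of $\partial D.$} Fix $x_0\in\partial D$ and a neighborhood $U$ of $x_0.$ Since $D$ is a nonempty open set it contains a nondegenerate continuum $F_0$ (e.g.\ a closed segment), and we put $r_F:=h(F_0)>0.$ Choose $\rho>0$ so small that $B_h(x_0,\rho)\subset U,$ $\rho<2r_F$ and $\rho<1/2,$ and set $V:=B_h(x_0,\rho/2),$ an open neighborhood of $x_0$ with $V\subset U.$ If $E\subset D$ is a continuum with $E\cap\partial U\ne\varnothing\ne E\cap\partial V,$ then $E$ contains a point $a$ with $h(a,x_0)\le\rho/2$ and a point $b$ with $h(b,x_0)\ge\rho$ (the latter because $B_h(x_0,\rho)\subset\operatorname{int}U,$ hence $\partial U\cap B_h(x_0,\rho)=\varnothing$); by the triangle inequality for the chordal metric, $h(E)\ge h(a,b)\ge\rho/2.$ Since also $h(F_0)=r_F>\rho/2,$ uniformity applied with $r:=\rho/2$ furnishes a $\delta=\delta(\rho)>0$ with $M(\Gamma(E,F_0,D))\ge\delta.$ Thus $V,$ $F:=F_0$ and $\delta$ witness strong accessibility of $\partial D$ at $x_0.$

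\emph{Strong accessibility of $\partial D$ implies uniformity.} Fix $r>0$ and let $\mathfrak F_r$ be the family of all continua $F\subset D$ with $h(F)\ge r.$ Since $\inf_{F\in\mathfrak F_r}h(F)\ge r>0,$ by Proposition~\ref{pr1} it suffices to exhibit one continuum $A\subset D$ with $\inf_{F\in\mathfrak F_r}M(\Gamma(F,A,D))>0.$ For each $x\in\partial D$ apply strong accessibility to the neighborhood $U_x:=B_h(x,r/4)$ to obtain an open (we may replace the given $V$ by $\operatorname{int}V,$ which only shrinks $\partial V$ since $\partial(\operatorname{int}V)\subset\partial V$) neighborhood $V_x\subset U_x,$ a compactum $F_x\subset D$ and a number $\delta_x>0$ such that $M(\Gamma(E,F_x,D))\ge\delta_x$ for every continuum $E\subset D$ meeting both $\partial U_x$ and $\partial V_x.$ As $\partial D$ is compact in $\overline{{\Bbb R}^n},$ finitely many neighborhoods $V_{x_1},\dots,V_{x_m}$ cover it; put $\delta':=\min_{1\le i\le m}\delta_{x_i}>0.$ The set $G:=\overline{D}\setminus\bigcup_{i=1}^{m}V_{x_i}$ is closed in the compactum $\overline{D},$ hence compact, and disjoint from $\partial D,$ so $G\subset D.$ Using that $D$ is open and connected, choose a connected compactum $A\subset D$ containing $G\cup\bigcup_{i=1}^{m}F_{x_i}$ (cover these finitely many compacta by finitely many closed balls lying in $D$ and join the balls by arcs in $D$).

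Now let $F\in\mathfrak F_r.$ If $F$ meets none of the $V_{x_i},$ then $F\subset\overline{D}\setminus\bigcup_iV_{x_i}=G\subset A,$ so $\Gamma(F,A,D)$ contains constant paths and $M(\Gamma(F,A,D))=\infty.$ Otherwise $F\cap V_{x_i}\ne\varnothing$ for some $i;$ since $h(\overline{U_{x_i}})\le r/2<r\le h(F),$ the continuum $F$ is not contained in $\overline{U_{x_i}},$ so $F$ has a point $a\in V_{x_i}\subset U_{x_i}$ and a point $b\notin\overline{U_{x_i}}.$ Connectedness of $F$ then forces $F\cap\partial U_{x_i}\ne\varnothing$ and $F\cap\partial V_{x_i}\ne\varnothing,$ so $E:=F$ is admissible for the point $x_i$ and $M(\Gamma(F,F_{x_i},D))\ge\delta_{x_i}\ge\delta';$ since $F_{x_i}\subset A,$ monotonicity of the modulus gives $M(\Gamma(F,A,D))\ge\delta'.$ Hence $\inf_{F\in\mathfrak F_r}M(\Gamma(F,A,D))\ge\delta'>0,$ and Proposition~\ref{pr1} yields $\inf_{F,F^{\ast}\in\mathfrak F_r}M(\Gamma(F,F^{\ast},D))>0;$ as every pair of continua $F,F^{\ast}\subset D$ with $h(F),h(F^{\ast})\ge r$ lies in $\mathfrak F_r,$ this is exactly the uniformity of $D$ at scale $r,$ and $r>0$ was arbitrary.

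The step I expect to be the main obstacle is precisely this reduction of the \emph{global} uniformity to the \emph{local}, point-by-point strong accessibility: one must cover the compact boundary by finitely many of the neighborhoods $V_x,$ absorb a compact core $G$ of $D$ into the connecting continuum $A$ so that the members of $\mathfrak F_r$ lying deep inside $D$ are controlled as well, and verify carefully that a continuum of chordal diameter $\ge r$ entering $V_{x_i}$ must genuinely cross $\partial U_{x_i}$ and $\partial V_{x_i};$ once this is in place, a single application of Proposition~\ref{pr1} closes the argument.
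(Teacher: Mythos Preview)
Your proof is correct. The first implication (uniformity $\Rightarrow$ strong accessibility) follows the same idea as the paper's: continua meeting both $\partial U$ and $\partial V$ have chordal diameter bounded below, and uniformity then supplies the~$\delta$; you are simply more explicit about fixing the compactum $F_0\subset D$ in advance.

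For the converse you take a genuinely different route. The paper argues by contradiction: it extracts sequences $F_k,F_k^{\,*}$ with $M(\Gamma(F_k,F_k^{\,*},D))\to 0$, passes to a limit point $x_0$ of points $x_k\in F_k$, applies strong accessibility at $x_0$ (invoking V\"ais\"al\"a's lemma when $x_0$ is an interior point), embeds the resulting compactum into a continuum $A$ via~\cite[Lemma~1]{Sm}, and then appeals to Proposition~\ref{pr1} with the family $\mathfrak F=\{F_k\}_{k\geqslant k_0}$. Your argument is direct: cover the compact set $\partial D$ by finitely many of the neighborhoods $V_{x_i}$, absorb both the compacta $F_{x_i}$ and the residual compact core $G=\overline{D}\setminus\bigcup_i V_{x_i}$ into a single continuum $A\subset D$, and verify that \emph{every} $F\in\mathfrak F_r$ links $A$ with modulus at least $\delta'$, after which one application of Proposition~\ref{pr1} finishes. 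This buys two things. First, you avoid any appeal to strong accessibility at interior points (hence no V\"ais\"al\"a lemma): continua missing all the $V_{x_i}$ are handled by the inclusion $F\subset G\subset A$ and the trivial constant-path argument. Second, since your family $\mathfrak F_r$ already contains all continua of diameter $\geqslant r$, the conclusion of Proposition~\ref{pr1} automatically covers both $F$ and $F^{\,*}$; in the paper's version the family $\mathfrak F=\{F_k\}$ omits the $F_k^{\,*}$, so the final step there tacitly needs a parallel argument for the second sequence.
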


\begin{proof} The fact that uniform domains have strongly accessible
boundaries has been proved in~\cite[Remark~1]{SevSkv$_1$}. For the
sake of completeness, we present this proof in full. Put $x_0\ne
\infty.$ Let $U$ be a neighborhood of a point $x_0$ and let
$\varepsilon_1>0$ be such that $V:=B(x_0, \varepsilon_1),$
$\overline{V}\subset U.$ Assume that, $\partial U\ne\varnothing$ and
$\partial V\ne\varnothing.$ Let $\varepsilon_2:={\rm
dist}\,(\partial U,
\partial V)>0.$ Let $F$ and $G$ be continua in
$D$ such that $F\cap\partial U\ne\varnothing\ne F\cap\partial V$ and
$G\cap\partial U\ne\varnothing\ne G\cap\partial V.$ It follows from
the last relations that, $h(F)\geqslant \varepsilon_2$ and
$h(G)\geqslant \varepsilon_2.$ Now, by the uniformity of $D$ there
is $\delta=\delta(\varepsilon_2)>0$ such that $M(\Gamma(F, G,
D))\geqslant \delta>0$ for any continua $F$ and $G.$ Thus, $\partial
D$ is strongly accessible at $x_0,$ as required.

\medskip
It remains to prove that domains with strongly accessible boundaries
are uniform.

We will prove this statement from the opposite. Let $D$ be a domain
which has a strongly accessible boundary, but it is not uniform.
Then there is $r>0$ such that, for any $k\in {\Bbb N}$ there are
continua $F_k$ and $F^{\,*}_k\subset D$ such that $h(F_k)\geqslant
r,$ $h(F^{\,*}_k)\geqslant r,$ however,
\begin{equation}\label{eq4A}
M(\Gamma(F_k, F^*_k, D))<1/k\,.
\end{equation}
Let $x_k\in F_k.$ Since $\overline{D}$ is compact set in
$\overline{{\Bbb R}^n},$ we may assume that $x_k\rightarrow x_0\in
\overline{D}.$ Note that the strongly accessibility of the domain
$D$ at the boundary points is assumed to be, and at the inner points
it is even weakly flat, which is the result of V\"{a}is\"{a}l\"{a}'s
lemma (see e.g.~\cite[Sect.~10.12]{Va}, cf.
\cite[Lemma~2.2]{SevSkv$_2$}). Let $U$ be a neighborhood of the
point $x_0$ such that $h(x_0,
\partial U)\leqslant r/2.$ Then there is a neighborhood $V\subset U,$ a compactum $F\subset D$
and a number $\delta>0$ such that the relation $M(\Gamma(E, F,
D))\geqslant \delta$ holds for any continuum $E\subset D$ such that
$E\cap
\partial U\ne\varnothing\ne E\cap
\partial V.$ By the choice of the neighborhood $U,$
we obtain that $F_k\cap U\ne\varnothing\ne F_k\cap (D\setminus U)$
for sufficiently large $k\in {\Bbb N}.$ Observe that, for the same
$k\in {\Bbb N},$ the condition $F_k\cap V\ne\varnothing\ne F_k\cap
(D\setminus V)$ holds. Then, by~\cite[Theorem~1.I.5.46]{Ku} we
obtain that $F_k\cap
\partial U\ne\varnothing\ne F_k\cap \partial V.$ Observe that, a compactum
$F$ can be imbedded in some continuum $A\subset D$
(see~\cite[Lemma~1]{Sm}). Then the inequality $M(\Gamma(E, A,
D))\geqslant \delta$ will only increase. Given the above, we obtain
that
\begin{equation}\label{eq4B}
M(\Gamma(F_k, A, D))\geqslant \delta\qquad \forall\,\,k\geqslant k_0
\end{equation}
for some $k_0\in {\Bbb N}.$ Taking~$\inf$ over all $k\geqslant k_0$
in~(\ref{eq4B}), we obtain that
\begin{equation}\label{eq4C}
\inf\limits_{k\geqslant k_0} M(\Gamma(F_k, A, D))\geqslant \delta\,.
\end{equation}
Set $\frak{F}:=\left\{F_k\right\}_{k=k_0}^{\infty}.$ Now, by the
condition~(\ref{eq4C}) and by Proposition~\ref{pr1}, we obtain that
$\inf\limits_{k\geqslant k_0} M(\Gamma(F_k, F^{\,*}_k, D))>0,$ that
contradicts the assumption made in~(\ref{eq4A}). The resulting
contradiction completes the proof of the lemma.~$\Box$
\end{proof}

\medskip
Obviously, weakly flat boundaries are strongly accessible. Now, by
Lemma~\ref{lem4} we obtain the following.

\medskip
\begin{corollary}\label{cor1}
{\sl\, If $D\subset {\Bbb R}^n$ has a weakly flat boundary, then $D$
is uniform.}
\end{corollary}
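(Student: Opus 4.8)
The plan is to deduce Corollary~\ref{cor1} directly from Lemma~\ref{lem4}, once one observes that weak flatness of $\partial D$ implies strong accessibility of $\partial D$. This implication is the only new ingredient, and it is just a routine comparison of the two definitions, so the corollary is essentially a one-line consequence of Lemma~\ref{lem4}.

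To prove the implication I would fix $x_0\in\partial D$ and an arbitrary neighbourhood $U$ of $x_0$, and construct the triple $(V,F,\delta)$ demanded by the definition of a strongly accessible boundary at $x_0$. Applying the definition of weak flatness at $x_0$ with the parameter $P=1$ and with this $U$, I obtain a neighbourhood $V\subset U$ of $x_0$ such that $M(\Gamma(E_1,E_2,D))>1$ whenever $E_1,E_2\subset D$ are continua each meeting both $\partial U$ and $\partial V$. If there is no continuum in $D$ meeting both $\partial U$ and $\partial V$ — which, for example, occurs when $U$ is so large that $\overline{D}\subset U$ — then the strong accessibility condition at $x_0$ holds vacuously, with this $V$, any compactum $F\subset D$, and $\delta=1$. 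Otherwise I fix one such continuum $F_0\subset D$; it is a compactum contained in $D$ meeting $\partial U$ and $\partial V$. Then for an arbitrary continuum $E\subset D$ with $E\cap\partial U\ne\varnothing\ne E\cap\partial V$ the pair $(E,F_0)$ is covered by the estimate above, so $M(\Gamma(E,F_0,D))>1$. Hence $V$, $F:=F_0$ and $\delta:=1$ witness strong accessibility of $\partial D$ at $x_0$; since $x_0\in\partial D$ was arbitrary, $\partial D$ is strongly accessible (with respect to the $n$-modulus).

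It then remains only to invoke Lemma~\ref{lem4}, which asserts that $D\subset{\Bbb R}^n$ has a strongly accessible boundary if and only if $D$ is uniform; combined with the previous step this gives that $D$ is uniform, which is the claim. I do not expect any real obstacle at the level of the corollary: all the substance is already absorbed into Lemma~\ref{lem4} (and, through Proposition~\ref{pr1}, into the step upgrading a modulus bound against one fixed continuum to a bound between two arbitrary members of a family), and we are entitled to use these. The only point requiring a trivial bit of care is the degenerate case distinction above, i.e.\ allowing for neighbourhoods $U$ so large that the strong accessibility condition at $x_0$ becomes vacuous.
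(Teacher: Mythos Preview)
Your proposal is correct and follows exactly the paper's approach: the paper simply states that ``obviously, weakly flat boundaries are strongly accessible'' and then invokes Lemma~\ref{lem4}, which is precisely what you do (with the implication spelled out in detail). Your added care about the vacuous case is harmless extra precision; the overall argument matches.
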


\section{The main lemma on lightness. Proof of Theorem~\ref{th2}}

The following statement holds.

\medskip
\begin{lemma}\label{lem7}
{\sl\,Let $n\geqslant 2,$ let $p=n$ and let $D$ be a domain with a
weakly flat boundary. Let $f:D\rightarrow D^{\,\prime}$ be an open
discrete and closed mapping of $D$ onto $D^{\,\prime}$ for which
there is a Lebesgue measurable function $Q:{\Bbb R}^n\rightarrow [0,
\infty]$ equals to zero outside $D^{\,\prime}$ such that the
conditions~(\ref{eq2*B})--(\ref{eqB2}) hold for any point $y_0\in
\partial{D^{\,\prime}}.$ Assume that, there is
$\varepsilon_0=\varepsilon_0(y_0)>0$ and a Lebesgue measurable
function $\psi:(0, \varepsilon_0)\rightarrow [0,\infty]$ such that
the relations~(\ref{eq7***})--(\ref{eq3.7.2}) hold, where $A(y_0,
\varepsilon, \varepsilon_0)$ is defined in~(\ref{eq1**}). Assume
also that, a domain $D^{\,\prime}$ is finitely connected on its
boundary. Then $f$ has a continuous extension
$\overline{f}:\overline{D}\rightarrow \overline{D^{\,\prime}},$
$\overline{f}(\overline{D})=\overline{D^{\,\prime}},$ and it is
light.}
\end{lemma}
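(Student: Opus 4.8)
The plan is to assemble Lemma~\ref{lem7} from the pieces already in place, so that it becomes essentially a corollary of Lemma~\ref{lem5} (or rather the route through Theorem~\ref{th1}) together with Lemma~\ref{lem2}. First I would verify that the hypotheses of Lemma~\ref{lem7} imply those needed for a continuous boundary extension. The conditions~(\ref{eq7***})--(\ref{eq3.7.2}) imposed here are slightly different from~(\ref{eq7B})--(\ref{eq7C}) in Lemma~\ref{lem5}: here $I(\varepsilon,\varepsilon_0)\to\infty$ and the integral in~(\ref{eq3.7.2}) is $o(I^p)$ rather than merely $O(I^p)$. I would observe that these are stronger: in particular, $I(\varepsilon,\varepsilon_0)\to\infty$ gives, for each $0<b_0<\varepsilon_0$, some $0<a<b_0$ with $I(a,b_0)>0$ (indeed $I(a,b_0)\to\infty$ as $a\to 0$), which is~(\ref{eq7D}); and~(\ref{eq3.7.2}) with $p=n$ gives~(\ref{eq7C}) with a suitable constant $C_0$ on each annulus $A(z_1,a,b_0)$. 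Hence Lemma~\ref{lem5} applies verbatim and yields the continuous extension $\overline{f}:\overline{D}\to\overline{D^{\,\prime}}$ with $\overline{f}(\overline{D})=\overline{D^{\,\prime}}$.

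Next I would invoke the lightness half. Lemma~\ref{lem2} requires: $D$ uniform with respect to $p$-modulus (here $p=n$), $D$ locally connected on its boundary, the modulus/majorant conditions~(\ref{eq2*B})--(\ref{eqB2}) and~(\ref{eq7***})--(\ref{eq3.7.2}) at every $y_0\in\partial D^{\,\prime}$, and the existence of a continuous extension $\overline{f}$. The continuous extension was just produced. The conditions on $Q$ and $\psi$ are exactly the hypotheses of Lemma~\ref{lem7}. It remains to pass from ``$D$ has a weakly flat boundary'' to the two hypotheses ``$D$ uniform'' and ``$D$ locally connected on $\partial D$''. For uniformity I would cite Corollary~\ref{cor1} (weakly flat $\Rightarrow$ uniform). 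For local connectedness on the boundary I would recall the standard fact that weakly flat boundaries are locally connected on the boundary (this is classical; if the paper wants it self-contained one can note that weak flatness at $x_0$ forces, for suitable nested neighborhoods, that $V\cap D$ is connected — otherwise two components could be separated with small modulus, contradicting the lower modulus bound). Granting these, Lemma~\ref{lem2} yields that $\overline{f}$ is light, which completes the proof.

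So the proof is short: (i) derive~(\ref{eq7B})--(\ref{eq7C}) from~(\ref{eq7***})--(\ref{eq3.7.2}) and apply Lemma~\ref{lem5} to get the continuous extension onto $\overline{D^{\,\prime}}$; (ii) note $D$ is uniform by Corollary~\ref{cor1} and locally connected on its boundary since it is weakly flat; (iii) apply Lemma~\ref{lem2} to conclude $\overline{f}$ is light.~$\Box$

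I expect the only genuine friction to be bookkeeping in step~(i): checking that the stronger $o(\cdot)$-hypothesis of Lemma~\ref{lem7}, specialized to $p=n$, really does give the $C_0 I^n(a,b_0)$ bound of~(\ref{eq7C}) on each fixed annulus — but since~(\ref{eq3.7.2}) is an asymptotic-as-$\varepsilon\to 0$ statement while~(\ref{eq7C}) is needed only for the single pair $a=R_*$, $b_0=R_0$ chosen in the proof of Lemma~\ref{lem5} (where one simply takes $C_0$ to be whatever finite value the left side divided by $I^n(R_*,R_0)$ happens to be), this is immediate. A secondary point to state cleanly is the weak-flatness $\Rightarrow$ local-connectedness-on-the-boundary implication, which I would handle by citation rather than by a fresh argument.
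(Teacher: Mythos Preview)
Your proposal is correct and follows essentially the same route as the paper: reduce to Lemma~\ref{lem5} for the continuous extension, then use weak flatness $\Rightarrow$ strong accessibility $\Rightarrow$ uniformity (the paper cites Lemma~\ref{lem4} directly rather than Corollary~\ref{cor1}, which is the same thing), and conclude lightness via Lemma~\ref{lem2}. You are in fact slightly more careful than the paper in explicitly noting the local-connectedness-on-$\partial D$ hypothesis of Lemma~\ref{lem2}; the paper's proof of Lemma~\ref{lem7} leaves this implicit, though the needed implication (weakly flat $\Rightarrow$ locally connected on the boundary) is cited as \cite[Lemma~3.15]{MRSY} in the proof of Lemma~\ref{lem3}.
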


\begin{proof}
Obviously, that $Q$ satisfies relations~(\ref{eq7B})--(\ref{eq7C})
of Lemma~\ref{lem5}. Now, by this Lemma, $f$ has a continuous
extension $\overline{f}:\overline{D}\rightarrow
\overline{D^{\,\prime}},$
$\overline{f}(\overline{D})=\overline{D^{\,\prime}}.$ Obviously,
since $D$ has a weakly flat boundary, $D$ has a strongly accessible
boundary, as well. Now, by Lemma~\ref{lem4} $D$ is uniform. Thus,
$\overline{f}$ is light by Lemma~\ref{lem2}.~$\Box$
\end{proof}

\medskip
{\it Proof of Theorem~\ref{th2}.} In the case~1), we choose
$\psi(t)=\frac{1}{t\log\frac{1}{t}},$ and in the case 2), we set
$$\psi(t)\quad=\quad \left \{\begin{array}{rr}
1/[tq^{\frac{1}{n-1}}_{y_0}(t)]\ , & \ t\in (\varepsilon,
\varepsilon_0)\ ,
\\ 0\ ,  &  \ t\notin (\varepsilon,
\varepsilon_0)\ ,
\end{array} \right.$$
Observe that, the relations~(\ref{eq7***})--(\ref{eq3.7.2}) hold for
these functions $\psi,$ where $p=n$ (the proof of this facts may be
found in~\cite[Proof of Theorem~1.1]{Sev$_1$}) Now, the desired
statement follows by Lemma~\ref{lem7}~$\Box$.

\section{The main lemma on discreteness. Proof of Theorem~\ref{th4}}

Finally, we formulate and prove a key statement about the
discreteness of mapping (see~\cite[Theorem~4.7]{Vu}). We note that a
stronger conclusion related to this result is related to a more
stringent condition on the boundary of the mapped domain. Namely, we
require that this domain be locally connected on its boundary.

\medskip
\begin{lemma}\label{lem3}
{\sl\, Let $n\geqslant 2,$ let $p=n$ and let $D$ be a domain with a
weakly flat boundary. Let $f:D\rightarrow {\Bbb R}^n$ be an open
discrete and closed mapping of $D$ for which there is a Lebesgue
measurable function $Q:{\Bbb R}^n\rightarrow [0, \infty]$ equals to
zero outside $D^{\,\prime}$ such that the
conditions~(\ref{eq2*B})--(\ref{eqB2}) hold for any point $y_0\in
\partial{D^{\,\prime}}.$ Assume that, there is
$\varepsilon_0=\varepsilon_0(y_0)>0$ and a Lebesgue measurable
function $\psi:(0, \varepsilon_0)\rightarrow [0,\infty]$ such that
the relations~(\ref{eq7***})--(\ref{eq3.7.2}) hold, where $A(y_0,
\varepsilon, \varepsilon_0)$ is defined in~(\ref{eq1**}). Assume
also that, a domain $D^{\,\prime}$ is locally connected on its
boundary. Then the mapping $f$ has a continuous extension
$\overline{f}:\overline{D}\rightarrow \overline{D^{\,\prime}}$ such
that $N(f, D)=N(f, \overline {D})<\infty.$ In particular,
$\overline{f}$ is discrete in $\overline{D}.$ }
\end{lemma}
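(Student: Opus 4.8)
The plan is to argue by contradiction along the following lines. Suppose $\overline{f}$ is not discrete in $\overline{D}$. Since $f$ itself is discrete in $D$ and $N(f,D)<\infty$ (this finiteness follows from the closedness of $f$ together with a degree argument, cf.~\cite[Theorem~2.8]{MS} or \cite[Theorem~3.2]{MRV}), the only way discreteness can fail is at a boundary point: there must exist $x_0\in\partial D$ and a sequence $x_k\in \overline{D}$, $x_k\to x_0$, $x_k\neq x_0$, with $\overline{f}(x_k)=\overline{f}(x_0)=:y_0$. By boundary-preservation of closed mappings (\cite[Theorem~3.3]{Vu}) we have $y_0\in\partial D^{\,\prime}$. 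The first main step is to reduce the two possibilities — infinitely many preimages clustering at $x_0$, or $N(f,\overline{D})>N(f,D)$ — to the same geometric configuration: a nondegenerate continuum on which $\overline{f}$ would have to be constant, or at least a sequence of small continua $C_k\subset D$ with $h(C_k)\geqslant\delta>0$ while $h(\overline{f}(C_k))\to 0$ and $h(\overline{f}(C_k),y_0)\to 0$. Here I would use local connectedness of $D^{\,\prime}$ on its boundary to connect nearby preimages by liftings of short arcs (as in the proof of Lemma~\ref{lem5}) and the total-lifting property of open discrete closed maps (\cite[Lemma~3.7]{Vu}).

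The second step is to invoke the auxiliary machinery already established. By Lemma~\ref{lem5} (whose hypotheses~(\ref{eq7B})--(\ref{eq7C}) are implied by~(\ref{eq7***})--(\ref{eq3.7.2}), exactly as in the proof of Lemma~\ref{lem7}), the continuous extension $\overline{f}:\overline{D}\to\overline{D^{\,\prime}}$ with $\overline{f}(\overline{D})=\overline{D^{\,\prime}}$ already exists. Since $D$ has a weakly flat boundary, it has a strongly accessible boundary, hence by Corollary~\ref{cor1} (via Lemma~\ref{lem4}) $D$ is uniform with respect to $n$-modulus. Now the configuration produced in the first step — continua $C_k$ with $h(C_k)\geqslant\delta$, $h(f(C_k))\to 0$, $h(f(C_k),y_0)\to 0$ — is precisely forbidden by Lemma~\ref{lem1}: that lemma guarantees $h(f(C_k),y_0)\geqslant\delta_1>0$ under the modulus hypotheses~(\ref{eq7***})--(\ref{eq3.7.2}). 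This contradiction rules out a failure of discreteness coming from a boundary cluster point, giving $N(f,\overline{D})=N(f,D)$ and in particular discreteness of $\overline{f}$ in $\overline{D}$.

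More carefully, to get the equality $N(f,\overline{D})=N(f,D)$ rather than merely ``$\overline{f}$ is discrete'', I would argue: if $y_0\in\partial D^{\,\prime}$ had $m$ distinct preimages $z_1,\dots,z_m\in\partial D$ under $\overline{f}$, choose small pairwise-disjoint $h$-balls around them and, by uniform continuity of $\overline{f}$, pull back a short arc $\beta\subset D^{\,\prime}$ ending at $y_0$ (which exists by local connectedness of $D^{\,\prime}$ at $y_0$) to obtain, for interior points $y$ on $\beta$ near $y_0$, at least $m$ distinct preimages lying in those $m$ disjoint balls; hence $N(y,f,D)\geqslant m$, so $N(f,D)\geqslant m$. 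Symmetrically, if $x_0\in\partial D$ were a non-isolated point of $\overline{f}^{\,-1}(y_0)$, I would select a sequence of preimages $x_k\to x_0$, join consecutive ones inside $D$ by arcs staying in shrinking connected neighborhoods (using \cite[Lemma~2.2]{HK} and \cite[Proposition~13.1]{MRSY} to obtain connected, hence path-connected, neighborhoods of the relevant compact piece of $\overline{f}^{\,-1}(y_0)$, exactly as in the proof of Lemma~\ref{lem2}), forming continua $C_k$ with $h(C_k)$ bounded below, and reach the same contradiction with Lemma~\ref{lem1}.

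The main obstacle I anticipate is the bookkeeping in the first step: cleanly converting ``$\overline{f}$ fails to be discrete'' into the precise hypotheses of Lemma~\ref{lem1}, i.e.\ producing genuine continua $C_j\subset D$ (not merely sequences of points) with $h(C_j)\geqslant\delta$ and $h(f(C_j))\to 0$. This is where local connectedness of $D^{\,\prime}$ on the boundary is essential — it lets one lift short connecting arcs in the target to arcs in $D$ via \cite[Lemma~3.7]{Vu} — and where one must be careful that the lifted arcs stay in $D$ and have loci with controlled chordal diameter. Once that reduction is in hand, the contradiction with Lemma~\ref{lem1} is immediate, and the finiteness $N(f,\overline{D})<\infty$ follows because each preimage set $\overline{f}^{\,-1}(y)$, $y\in\overline{D^{\,\prime}}$, is now discrete and closed in the compact set $\overline{D}$, hence finite, with the bound uniform by the degree-type estimate valid for $y\in D^{\,\prime}$ and the passage to the boundary just performed.
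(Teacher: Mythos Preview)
Your overall strategy matches the paper's: obtain the continuous extension from Lemma~\ref{lem5}, note $N(f,D)<\infty$ via \cite[Theorem~2.8]{MS}, use Corollary~\ref{cor1} to get uniformity of $D$, and derive a contradiction from Lemma~\ref{lem1} by producing continua $C_j\subset D$ with $h(C_j)\geqslant\delta$ while $h(f(C_j))\to 0$ and $h(f(C_j),y_0)\to 0$.

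The one substantive difference is in how the $k+1$ interior preimages are manufactured. You propose to ``pull back a short arc $\beta\subset D^{\,\prime}$ ending at $y_0$'' via \cite[Lemma~3.7]{Vu} and argue that the lifts land in the prescribed disjoint balls around the $z_i$. The paper instead works with the \emph{components} $V_p^i$ of $f^{\,-1}(U_p^{\,\prime})$ containing a neighborhood of each $x_i$ (where $U_p^{\,\prime}\subset B(y_0,1/p)$ is a connected neighborhood furnished by local connectedness of $D^{\,\prime}$), shows directly via Lemma~\ref{lem1} that $h(\overline{V_p^i})\to 0$ as $p\to\infty$ (so the $V_p^i$ are eventually pairwise disjoint), and then invokes \cite[Lemma~3.6]{Vu} to conclude $f(V_p^i)=U_p^{\,\prime}$ for each $i$, giving $k+1$ preimages of any interior point of $U_p^{\,\prime}$. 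This is cleaner than your lifting route because it makes explicit exactly the step you leave vague: \emph{why} the pulled-back arc stays inside the small ball around $z_i$. In your approach that control also comes down to showing the relevant preimage component has small diameter, which is precisely what the paper isolates and proves. Your separate ``non-isolated point'' argument is correct but redundant once $N(\overline{f},\overline{D})\leqslant N(f,D)<\infty$ is established.
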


\begin{proof}
First of all, the possibility of continuous extension of $f$ to a
mapping $\overline{f}:\overline{D}\rightarrow
\overline{D^{\,\prime}}$ follows by Lemma~\ref{lem5}. Note also that
$N(f, D) <\infty,$ see~\cite[Theorem~2.8]{MS}. Let us to prove that
$N(f, D)=N(f, \overline{D}).$ Next we will reason using the scheme
proof of Theorem~4.7 in \cite{Vu}. Assume the contrary. Then there
are points $y_0\in \partial D^{\,\prime}$ and $x_1,x_2,\ldots, x_k,
x_{k+1}\in
\partial D$ such that $f(x_i)=y_0,$ $i=1,2,\ldots, k+1$ and $k:=N(f,
D).$ We may assume that $y_0\ne\infty.$ Since by the assumption
$D^{\,\prime}$ is locally connected at any point of its boundary,
for any $p\in {\Bbb N}$ there is a neighborhood
$\widetilde{U^{\,\prime}_p}\subset B(y_0, 1/p)$ such that the set
$\widetilde{U^{\,\prime}_p}\cap D^{\,\prime}=U^{\,\prime}_p$ is
connected.

Let us to prove that, for any $i=1,2,\ldots, k+1$ there is a
component $V_p^i$ of the set $f^{\,-1}(U^{\,\prime}_p)$ such that
$x_i\in\overline{V_p^i}.$ Fix $i=1,2,\ldots, k+1.$ By the continuity
of $\overline{f}$ in $\overline{D},$ there is $r_i=r_i(x_i)>0$ such
that $f(B(x_i, r_i)\cap D)\subset U^{\,\prime}_p.$
By~\cite[Lemma~3.15]{MRSY}, a domain with a weakly flat boundary is
locally connected on its boundary. Thus, we may find a neighborhood
$W_i\subset B(x_i, r_i)$ of the point $x_i$ such that $W_i\cap D$ is
connected. Then $W_i\cap D$ belongs to one and only one component
$V^p_i$ of the set $f^{\,-1}(U^{\,\prime}_p),$ while
$x_i\in\overline{W_i\cap D}\subset \overline{V_p^i},$ as required.

Next we show that the sets $\overline{V_p^i}$ are disjoint for any
$i=1,2,\ldots, k+1$ and large enough $p\in {\Bbb N}.$ In turn, we
prove for this that $h(\overline{V_p^i})\rightarrow 0$ as
$p\rightarrow\infty$ for each fixed $i=1,2,\ldots, k+1.$ Let us
assume the opposite. Then there is $1\leqslant i_0\leqslant k+1,$ a
number $r_0>0,$ $r_0<\frac{1}{2}\min\limits_{1\leqslant i,
j\leqslant k+1, i\ne j}h(x_i, x_j)$ and an increasing sequence of
numbers $p_m,$ $m=1,2,\ldots,$ such that $S_h(x_{i_0}, r_0)\cap
\overline{V_{p_m}^{i_0}}\ne\varnothing,$ where $S_h(x_0, r)=\{x\in
\overline{{\Bbb R}^n}: h(x, x_0)=r\}.$ In this case, there are $a_m,
b_m\in V_{p_m}^{i_0}$ such that $a_m\rightarrow x_{i_0}$ as
$m\rightarrow\infty$ and $h(a_m, b_m)\geqslant r_0/2.$ Join the
points $a_m$ and $b_m$ by a path $C_m,$ which entirely belongs to
$V_{p_m}^{i_0}.$ Then $h(|C_m|)\geqslant r_0/2$ for $m=1,2,\ldots .$
On the other hand, since $\partial D$ is strongly accessible, $D$ is
uniform by Lemma~\ref{lem4}, as well. Since $|C_m|\subset
f(V_{p_m}^{i_0})\subset B(y_0, 1/p_m),$ then simultaneously
$h(f(|C_m|))\rightarrow 0$ as $m\rightarrow\infty$ and $h(|C_m|,
y_0)\rightarrow 0$ as $m\rightarrow\infty,$ that contradicts with
Lemma~\ref{lem1}. The resulting contradiction indicates the
incorrectness of the above assumption.

By~\cite[Lemma~3.6]{Vu} $f$ is a mapping of $V_p^i$ onto
$U^{\,\prime}_p$ for any $i=1,2,\ldots, k, k+1.$ Thus, $N(f,
D)\geqslant k+1,$ which contradicts the definition of the number
$k.$ The obtained contradiction refutes the assumption that $N(f,
\overline{D})>N(f, D).$ The lemma is proved.~$\Box$
\end{proof}

\medskip
{\it Proof of Theorem~\ref{th4}.} In the case~1), we choose
$\psi(t)=\frac{1}{t\log\frac{1}{t}},$ and in the case 2), we set
$$\psi(t)\quad=\quad \left \{\begin{array}{rr}
1/[tq^{\frac{1}{n-1}}_{y_0}(t)]\ , & \ t\in (\varepsilon,
\varepsilon_0)\ ,
\\ 0\ ,  &  \ t\notin (\varepsilon,
\varepsilon_0)\ ,
\end{array} \right.$$
Observe that, the relations~(\ref{eq7***})--(\ref{eq3.7.2}) hold for
these functions $\psi,$ where $p=n$ (the proof of this facts may be
found in~\cite[section~3]{Sev$_1$}). The desired conclusion follows
by Lemma~\ref{lem3}.~$\Box$


\medskip
{\bf \noindent Evgeny Sevost'yanov} \\
{\bf 1.} Zhytomyr Ivan Franko State University,  \\
40 Bol'shaya Berdichevskaya Str., 10 008  Zhytomyr, UKRAINE \\
{\bf 2.} Institute of Applied Mathematics and Mechanics\\
of NAS of Ukraine, \\
1 Dobrovol'skogo Str., 84 100 Slavyansk,  UKRAINE\\
esevostyanov2009@gmail.com


\begin{thebibliography}{99}

{\small

\bibitem[AS]{AS} {\sc Adamowicz, T. and N.~Shanmugalingam:}
Non-conformal Loewner type estimates for modulus of curve families.
- Ann. Acad. Sci. Fenn. Math.~35, 2010, 609-626.

\bibitem[Cr$_1$]{Cr$_1$} {\sc Cristea, M.:} On generalized quasiconformal
mappings. - Complex Variables and Elliptic Equations~59:2, 2014,
232–-246.

\bibitem[Cr$_2$]{Cr$_2$} {\sc Cristea, M.:} Boundary behaviour of the mappings satisfying generalized inverse
modular inequalities. - Complex Variables and Elliptic
Equations~60:4, 2015, 437–-469.

\bibitem[Cr$_3$]{Cr$_3$} {\sc Cristea, M.:} On the lightness of the mappings satisfying generalized inverse
modular inequalities. - Israel Journal of Mathematics~227:2, 2018,
545–-562.

\bibitem[HaK]{HaK} {\sc Hajlasz, P. and P.~Koskela:} Sobolev met
Poincare. - Mem. Amer. Math. Soc.~145:688, 2000, 1-101.

\bibitem[He]{He} {\sc Heinonen, J.:} Lectures on Analysis on metric
spaces. - Springer Science+Business Media, New York, 2001.

\bibitem[HK]{HK} {\sc Herron, J. and P.~Koskela:} Quasiextremal distance domains
and conformal mappings onto circle domains. - Compl. Var. Theor.
Appl. 15, 1990, 167-179.

\bibitem[Ku]{Ku} {\sc Kuratowski, K.:} Topology, v.~2. -- Academic
Press, New York--London, 1968.

\bibitem[MRV]{MRV} {\sc Martio, O., S.~Rickman and  J.~V\"{a}is\"{a}l\"{a}:}
Definitions for quasiregular mappings. - Ann. Acad. Sci. Fenn. Ser.
A1 448, 1969, 1-40.

\bibitem[MRSY]{MRSY} {\sc Martio, O., V. Ryazanov, U. Srebro, and E. Yakubov:} Moduli in modern mapping
theory. - Springer Science + Business Media, LLC, New York, 2009.

\bibitem[MS]{MS} {\sc Martio, O., U.~Srebro:} Automorphic
quasimeromorphic mappings in ${\Bbb R}^n.$ - Acta Math.~135, 1975,
221-247.

\bibitem[Na]{Na} {\sc N\"{a}kki, R:} Extension of Loewner's capacity
theorem. - Trans. Amer. Math. Soc.~180, 1973, 229-236.

\bibitem[NP]{NP} {\sc N\"{a}kki, R., and B. Palka:} Uniform equicontinuity of quasiconformal mappings. - Proc.
Amer. Math. Soc. 37:2, 1973, 427-433.

\bibitem[Sa]{Sa} {\sc Saks, S.:} Theory of the Integral. - Dover, New York, 1964.

\bibitem[Sev$_1$]{Sev$_1$} {\sc Sevost'yanov, E.A.:} On open and discrete mappings with a
modulus condition. - Ann. Acad. Sci. Fenn.~41, 2016, 41--50.

\bibitem[Sev$_2$]{Sev$_2$} {\sc Sevost'yanov, E.A.:}
On mappings with the inverse Poletsky inequality on Riemannian
manifolds. - Acta Mathematica Hungarica~167:2, 2022, 576--611.

\bibitem[Sev$_3$]{Sev$_3$} {\sc Sevost'yanov, E.A.:}
On boundary extension of mappings in metric spaces in terms of prime
ends. - Ann. Acad. Sci. Fenn. Math.~44:1, 2019, 65--90.

\bibitem[SevSkv$_1$]{SevSkv$_1$}  {\sc Sevost’yanov, E.A. and S.A.~Skvortsov:}
On the equicontinuity of families of mappings in the case of
variable domains. - Ukrainian Mathematical Journal~71:7, 2019,
1071-1086.

\bibitem[SevSkv$_2$]{SevSkv$_2$} {\sc Sevost’yanov, E.A. and
S.A.~Skvortsov:} On mappings whose inverse satisfy the Poletsky
inequality. - Ann. Acad. Scie. Fenn. Math.~45, 2020, 259-277.

\bibitem[SevSkv$_3$]{SevSkv$_3$} {\sc Sevost’yanov, E.A. and S.A.~Skvortsov:}
Logarithmic H\"{o}lder continuous mappings and Beltrami equation. -
Analysis and Mathematical Physics 11:3, 2021, 138.

\bibitem[SSD]{SSD} {\sc Sevost'yanov, E.A., S.O.~Skvortsov and
O.P.~Dovhopiatyi:} On non-homeomorphic mappings with inverse
Poletsky inequality. - Journal of Mathematical Sciences 252:4, 2021,
541-557.

\bibitem[Sm]{Sm} {\sc Smolovaya, E.S.:} Boundary behavior of ring
$Q$-homeomorphisms in metric spaces. - Ukr. Math. Journ.~62:5, 2010,
785–793.

\bibitem[Sr]{Sr} {\sc Srebro, U:} Conformal capacity and quasiregular mappings. - Ann. Acad.
Sci. Fenn. Ser. A I. Math.~529, 1973, 1--8.

\bibitem[Vu]{Vu} {\sc Vuorinen, M.:} Exceptional sets and boundary behavior of quasiregular
mappings in $n$-space. - Ann. Acad. Sci. Fenn. Ser. A 1. Math.
Dissertationes 11, 1976, 1-44.

\bibitem[Va]{Va} {\sc V\"{a}is\"{a}l\"{a}, J.:}
Lectures on $n$-dimensional quasiconformal mappings. - Lecture Notes
in Math. 229, Springer-Verlag, Berlin etc., 1971.

 }


\end{thebibliography}
\end{document}